\documentclass{amsart}

\usepackage{amssymb}
\usepackage{amsmath}
\usepackage{amscd}
\usepackage{ascmac}
\usepackage{graphicx}

\usepackage{macro}
\usepackage{here}

\usepackage{pinlabel}

\theoremstyle{plain}
\newtheorem{thm}{Theorem}[section]
\newtheorem{cor}[thm]{Corollary}
\newtheorem{lem}[thm]{Lemma}
\newtheorem{prop}[thm]{Proposition}

\theoremstyle{definition}
\newtheorem{defn}[thm]{Definition}

\theoremstyle{remark}
\newtheorem{rem}[thm]{Remark}

\begin{document}

\title{A note on signature of Lefschetz fibrations with planar fiber} 
\author[A.~Miyamura]{Akira Miyamura}
\address{Department of Mathematics, Tokyo Institute of Technology, 2-12-1 Oh-okayama, Meguro-ku, Tokyo 152-8551, Japan}
\email{miyamura.a.aa@m.titech.ac.jp}
\keywords{4-manifold, Lefschetz fibration, signature}
\date{August 1, 2017; MSC 2010: primary 57N13, secondary 55R05}

\maketitle

\begin{abstract}
Using theorems of Eliashberg and McDuff, Etnyre \cite{Et} proved that the intersection form of a symplectic filling of a contact 3-manifold supported by planar open book is negative definite.
 In this paper, we prove a signature formula for allowable Lefschetz fibrations over $D^2$ with planar fiber by computing Maslov index appearing in Wall's non-additivity formula.
 The signature formula leads to an alternative proof of Etnyre's theorem via works of Niederkr\"uger and Wendl \cite{NWe} and Wendl \cite{We}.
 Conversely, Etnyre's theorem, together with the existence theorem of Stein structures on Lefschetz fibrations over $D^2$ with bordered fiber by Loi and Piergallini \cite{LP}, implies the formula.
\end{abstract}

\section{Introduction}

The signature $\sigma(Y)$ of a smooth compact oriented $4$-manifold $Y$ is defined as the signature of the intersection form $Q_Y$ of $Y$.
 A signature formula for symplectic Lefschetz fibration over $S^2$ was studied by Smith \cite{S}.
 Endo \cite{En} gave a local signature formula for hyperelliptic Lefschetz fibrations over closed surfaces.
 Endo and Nagami \cite{EN} defined the signature of relators in the mapping class group of a fiber to obtain a signature formula of Lefschetz fibrations over $S^2$.
 An algorithm for computation of signature of Lefschetz fibrations over $D^2$ or $S^2$ was found by Ozbagci \cite{O}.
 Endo, Hasegawa, Kamada and Tanaka \cite{EHKT} showed that the signature of a Lefschetz fibration over a closed surface is equal to the signature of the corresponding chart. 
 There are many results about signature of Lefschetz fibrations with closed fiber.
 However, there are few such results for Lefschetz fibrations with bordered fiber.
 
In this paper, we prove a signature formula for allowable Lefschetz fibrations over $D^2$ with planar fiber by computing the Maslov index appearing in Wall's non-additivity formula \cite{Wa}.
 Let $\Sigma_{g,r}$ be a surface of genus $g$ with $r$ boundary components.
 Our main result is the following:
 


\begin{thm}\label{thm1}
Let $f:Y\rightarrow D^2$ be an allowable Lefschetz fibration with fiber $\Sigma_{0,r+1}$ and vanishing cycles $\gamma_1,\ldots,\gamma_m$ on $\Sigma_{0,r+1}$. 
Then the following equality holds:
\[
\sigma(Y)=-m+{\rm dim}\langle \gamma_1,\dots,\gamma_m\rangle.
\]
\end{thm}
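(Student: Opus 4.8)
The plan is to assemble $Y$ from $\Sigma_{0,r+1}\times D^2$ by attaching the $m$ vanishing-cycle $2$-handles one at a time, to apply Wall's non-additivity at each stage, and then to evaluate the Maslov indices that appear. Write $F=\Sigma_{0,r+1}$, set $X_0=F\times D^2$ and $X_i=X_{i-1}\cup h_i$, where $h_i\cong D^2\times D^2$ is attached along a copy of $\gamma_i$ in a page of $\partial X_{i-1}$ with framing $-1$ relative to the page framing; then $X_m=Y$. Since $F$ is planar, $H_2(X_0)=0$, so $\sigma(X_0)=0$, and $\sigma(h_i)=\sigma(D^4)=0$. Viewing $X_i=X_{i-1}\cup_{\nu\gamma_i}h_i$ as a gluing along the attaching solid torus $\nu\gamma_i=S^1\times D^2\subset\partial X_{i-1}$, Wall's non-additivity formula expresses the defect $\sigma(X_i)-\sigma(X_{i-1})$ as an explicit integer $\tau_i\in\{-1,0,1\}$, a Maslov index of three Lagrangian lines in the symplectic plane $H_1(T_i;\mathbb{R})\cong\mathbb{R}^2$, where $T_i=\partial(\nu\gamma_i)$. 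Hence $\sigma(Y)=\sum_{i=1}^m\tau_i$, and, comparing with the statement, it suffices to prove
\[
\tau_i=\begin{cases}-1,&[\gamma_i]\in\langle[\gamma_1],\dots,[\gamma_{i-1}]\rangle\ \text{in}\ H_1(F),\\ 0,&\text{otherwise,}\end{cases}
\]
because the number of indices $i$ of the first kind is exactly $m-\dim\langle\gamma_1,\dots,\gamma_m\rangle$.

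Next I would write out the three lines in the basis $\{\mu_i,\lambda_i\}$ of $H_1(T_i)$, with $\mu_i$ the meridian of $\gamma_i$ and $\lambda_i$ its page-framing longitude. The kernel of $H_1(T_i)\to H_1(\nu\gamma_i)$ is $\langle\mu_i\rangle$, and the kernel of $H_1(T_i)\to H_1(\partial h_i\setminus\nu\gamma_i)$ is $\langle\lambda_i-\mu_i\rangle$, the attaching framing curve, which bounds a disk in the belt region $D^2\times\partial D^2$ of the handle. These two lines are always distinct --- precisely the effect of the ``$-1$'' in the Lefschetz framing --- so $\tau_i$ depends only on the position, relative to them, of the third line $L_i=\ker\bigl(H_1(T_i)\to H_1(\partial X_{i-1}\setminus\nu\gamma_i)\bigr)$; in particular $\tau_i=0$ as soon as $L_i=\langle\mu_i\rangle$, whereas if $L_i$ is a line distinct from both $\langle\mu_i\rangle$ and $\langle\lambda_i-\mu_i\rangle$ then $\tau_i=\pm1$. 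So everything is reduced to computing $H_1$ of the complement of $\gamma_i$ in $\partial X_{i-1}$.

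For this I would use that $\partial X_{i-1}$ is obtained from $\partial(F\times D^2)$ --- the trivial open book on $F$ --- by surgeries along $\gamma_1,\dots,\gamma_{i-1}$ with framing $-1$ relative to the page, each lying in a distinct page, while $\gamma_i$ lies in a further page. A Mayer--Vietoris computation then identifies $H_1\bigl(\partial X_{i-1}\setminus\nu\gamma_i\bigr)$, modulo the torsion carried by meridians of the surgery curves, with a quotient of $H_1(F)$ by $\langle[\gamma_1],\dots,[\gamma_{i-1}]\rangle$ in which $\lambda_i$ maps to the class of $[\gamma_i]$. Consequently $\lambda_i$ survives --- so $L_i=\langle\mu_i\rangle$ and $\tau_i=0$ --- exactly when $[\gamma_i]\notin\langle[\gamma_1],\dots,[\gamma_{i-1}]\rangle$; and when $[\gamma_i]$ lies in that span one finds $L_i=\langle\lambda_i-k\mu_i\rangle$ for an integer $k$ (a self-linking-type invariant of $\gamma_i$ inside $\partial X_{i-1}$), a line distinct from $\langle\mu_i\rangle$ and from $\langle\lambda_i-\mu_i\rangle$, so $\tau_i=\pm1$; that it equals $-1$ is forced by the right-handedness of the Lefschetz vanishing cycles, which fixes the sign of the framing shift. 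This gives the displayed values of $\tau_i$, and summing them proves the theorem.

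The step I expect to be the main obstacle is exactly this last analysis: tracking how the earlier page surgeries (with framing $-1$ relative to the page) act on the first homology of the complement of a curve lying in yet another page, and then pinning the Maslov index down so that it is $-1$, and not $+1$, in the homologically dependent case --- the kind of orientation-and-framing bookkeeping for which Wall's non-additivity is notorious. The remaining ingredients --- the handle decomposition, the vanishing of $\sigma(X_0)$ and of each $\sigma(h_i)$, the identification of the two ``easy'' Lagrangian lines, and the combinatorial identity counting the dependent indices --- are routine.
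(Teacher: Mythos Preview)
Your strategy is genuinely different from the paper's. You run Ozbagci's algorithm: attach the Lefschetz $2$-handles one at a time and apply Wall's formula at each step, so that the theorem reduces to the dichotomy $\tau_i\in\{0,-1\}$ according to whether $[\gamma_i]$ is independent of its predecessors. The paper instead performs a \emph{single} gluing: it caps off the $r+1$ boundary circles of the fibre with disks to obtain a Lefschetz fibration with fibre $S^2$, whose total space is $D^2\times S^2\#\, m\,\overline{\mathbb{CP}^2}$ and hence has signature $-m$; then one application of Wall gives $-m=\sigma(Y)-\sigma(V;L_-,L_0,L_+)$. The Maslov term is computed globally (Lemma~\ref{lem1}, Propositions~\ref{p1}--\ref{p2}): the form $\Psi$ is shown to be positive definite by an explicit sum-of-squares, and $\dim W$ is identified with $\dim\langle\gamma_1,\dots,\gamma_m\rangle$ via the linear-algebra fact $\operatorname{rank}(B\,{}^t\!B)=\operatorname{rank}(B)$. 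The paper's approach trades your $m$ two-dimensional Maslov computations for one $(2r+2)$-dimensional one, but the latter has enough structure that the sign and the rank fall out cleanly.

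The gap in your proposal is exactly the one you flag. In the dependent case you assert that $L_i=\langle\lambda_i-k\mu_i\rangle$ is ``a line distinct from $\langle\mu_i\rangle$ and from $\langle\lambda_i-\mu_i\rangle$'', but you give no argument that $k\neq 1$; if $k=1$ then $L_i=L_+$, $W=0$, and $\tau_i=0$ rather than $-1$. Concretely, take $F=\Sigma_{0,2}$ and $\gamma_1=\gamma_2$ the core. Removing $\nu\gamma_1\cup\nu\gamma_2$ from $\partial(F\times D^2)\cong S^1\times S^2$ leaves $T^2\times[0,1]$, and after the first $(-1)$-surgery one has $L_-=\langle\lambda_2-\epsilon\mu_2\rangle$ with $\epsilon=\pm1$ determined by whether the two meridians $\mu_1,\mu_2$ are homologous or anti-homologous in the link complement; only a careful orientation check gives $\epsilon=-1$ and hence $\tau_2=-1$. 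Your appeal to ``right-handedness'' is a heuristic, not a proof, and in the general step the value of $k$ depends on all of the earlier surgeries. This bookkeeping can be carried through, but it is the entire content of the theorem in your framework; the paper's capping-off trick is designed precisely to avoid it.
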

Here $\langle \gamma_1,\dots,\gamma_m\rangle$ denotes the vector subspace of $H_1(\Sigma_{0,r+1};\R)$ generated by the homology classes of $\gamma_1,\dots,\gamma_m$.

\begin{rem}
Theorem \ref{thm1} holds only in the case that the fiber of a Lefschetz fibration is planar.
\end{rem}

Moreover, Theorem \ref{thm1} implies the following corollaries.
\begin{cor}\label{corr}
For $f:Y\rightarrow D^2$ as in Theorem \ref{thm1}, the signature $\sigma(Y)$ of $Y$ is equal to $-m+r-b_1(Y)$.
\end{cor}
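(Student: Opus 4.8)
The plan is to deduce the corollary from Theorem \ref{thm1} by expressing the dimension appearing there in terms of $b_1(Y)$. Concretely, the identity I aim to establish is
\[
\dim\langle\gamma_1,\dots,\gamma_m\rangle \;=\; r-b_1(Y);
\]
granting it, substituting into the signature formula $\sigma(Y)=-m+\dim\langle\gamma_1,\dots,\gamma_m\rangle$ of Theorem \ref{thm1} immediately yields $\sigma(Y)=-m+r-b_1(Y)$.

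To prove the displayed identity I would use the standard handle description of an allowable Lefschetz fibration over $D^2$. Fix a sub-disk $D^2_0\subset D^2$ containing no critical value; then $f^{-1}(D^2_0)\cong\Sigma_{0,r+1}\times D^2_0$, and $Y$ is obtained from $f^{-1}(D^2_0)$ by attaching one $2$-handle for each critical value along a copy of the corresponding vanishing cycle $\gamma_i$ pushed into a nearby regular fiber (the attaching framing being one less than the fiber framing). Since $\Sigma_{0,r+1}$ is homotopy equivalent to a wedge of $r$ circles, so is $f^{-1}(D^2_0)$, and hence $Y$ is homotopy equivalent to the CW complex obtained from $\bigvee_{i=1}^{r}S^1$ by attaching $m$ two-cells whose attaching loops are $\gamma_1,\dots,\gamma_m$. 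Reading off cellular homology with $\R$-coefficients, the second boundary map sends the $i$-th two-cell to $[\gamma_i]\in H_1(\Sigma_{0,r+1};\R)\cong\R^r$, so
\[
H_1(Y;\R)\;\cong\;\R^r\big/\langle\gamma_1,\dots,\gamma_m\rangle,
\]
and therefore $b_1(Y)=r-\dim\langle\gamma_1,\dots,\gamma_m\rangle$, using that $\dim_\R H_1(\Sigma_{0,r+1};\R)=r$.

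I do not anticipate any serious obstacle: the entire argument reduces to extracting $H_1$ from the handle (equivalently CW) decomposition of $Y$, and the only point to phrase with a little care is that the attaching circle of the $i$-th $2$-handle is isotopic in the fiber to $\gamma_i$, so that it represents $[\gamma_i]$ in $H_1$ of the $1$-skeleton. The $2$-handle framings play no role in this first-homology computation, and everything else is routine. Combining the identity above with Theorem \ref{thm1} completes the proof.
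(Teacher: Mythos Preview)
Your argument is correct and follows essentially the same approach as the paper: both use the handle decomposition of $Y$ (one $0$-handle, $r$ $1$-handles, $m$ $2$-handles) to read off the cellular chain complex and conclude $b_1(Y)=r-\dim\langle\gamma_1,\dots,\gamma_m\rangle$, then substitute into Theorem~\ref{thm1}. Your write-up is somewhat more detailed in justifying why the attaching circles represent the classes $[\gamma_i]$, but the underlying idea is identical.
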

\begin{proof}
The handle decomposition of $Y$ consists of one 0-handle, $r$\ 1-handles and $m$\ 2-handles.
 The 1-handles correspond to the generators of $H_1(\Sigma_{0,r+1};\R)$ and the boundary operator sends each 2-handle to the 1-chain of $Y$ corresponding to the homology class of a vanishing cycle. 
Hence $b_1(Y)$ coincides with $r-\textrm{dim}\langle \gamma_1,\dots,\gamma_m\rangle$.
 Therefore we have $\sigma(Y)=-m+r-b_1(Y)$ from Theorem \ref{thm1}.

\end{proof}

\begin{cor}\label{cor1}
For $f:Y\rightarrow D^2$ as in Theorem \ref{thm1}, we have $b_2^+(Y)=b_2^0(Y)=0$.
\end{cor}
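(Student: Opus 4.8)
The plan is to deduce the corollary purely formally from Theorem~\ref{thm1} together with a Betti-number count. Recall that the intersection form $Q_Y$ determines the splitting $\sigma(Y)=b_2^+(Y)-b_2^-(Y)$ and $b_2(Y)=b_2^+(Y)+b_2^-(Y)+b_2^0(Y)$, where $b_2^0(Y)$ is the dimension of the radical of $Q_Y$. Since $b_2^+(Y)$ and $b_2^0(Y)$ are non-negative integers, it suffices to prove the single relation $2b_2^+(Y)+b_2^0(Y)=0$, and this will follow once we check that $\sigma(Y)=-b_2(Y)$.

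To compute $b_2(Y)$ I would reuse the handle decomposition invoked in the proof of Corollary~\ref{corr}: $Y$ is homotopy equivalent to a CW complex built from a single $0$-cell, $r$ $1$-cells and $m$ $2$-cells, and in particular it has no cells in dimensions $3$ or $4$. Hence $\chi(Y)=1-r+m$, while $b_0(Y)=1$ and $b_3(Y)=b_4(Y)=0$. Feeding in the identity $b_1(Y)=r-\dim\langle\gamma_1,\dots,\gamma_m\rangle$ already established in Corollary~\ref{corr}, the Euler characteristic relation $\chi(Y)=b_0(Y)-b_1(Y)+b_2(Y)$ gives $b_2(Y)=m-\dim\langle\gamma_1,\dots,\gamma_m\rangle$.

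Comparing this with Theorem~\ref{thm1} we get $\sigma(Y)=-m+\dim\langle\gamma_1,\dots,\gamma_m\rangle=-b_2(Y)$. Substituting the two expressions above then yields
\[
b_2^+(Y)-b_2^-(Y)=-\bigl(b_2^+(Y)+b_2^-(Y)+b_2^0(Y)\bigr),
\]
i.e. $2b_2^+(Y)+b_2^0(Y)=0$, whence $b_2^+(Y)=b_2^0(Y)=0$. The whole argument is elementary linear algebra plus the Euler characteristic count; the only place to be slightly careful is the vanishing of $b_3(Y)$ and $b_4(Y)$, which is immediate from the absence of $3$- and $4$-handles in the decomposition, so I do not expect any real obstacle.
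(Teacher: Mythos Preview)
Your argument is correct and essentially identical to the paper's: both combine the signature formula with the Euler characteristic computed from the handle decomposition (one $0$-handle, $r$ $1$-handles, $m$ $2$-handles, no higher handles) to obtain $2b_2^+(Y)+b_2^0(Y)=0$. The only cosmetic difference is that the paper routes the signature through Corollary~\ref{corr} as $\sigma(Y)=-m+r-b_1(Y)$ before adding the Euler-characteristic equation, whereas you first isolate $b_2(Y)=m-\dim\langle\gamma_1,\dots,\gamma_m\rangle$ and then observe $\sigma(Y)=-b_2(Y)$; the substance is the same.
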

\begin{proof}
By Corollary \ref{corr} and the definition of signature, we get
\[
\sigma(Y)=b_2^+(Y)-b_2^-(Y)=-m+r-b_1(Y).
\]
Since the Euler characteristic of $Y$ is related to the numbers of handles, we have
\[
1-b_1(Y)+b_2^+(Y)+b_2^-(Y)+b_2^0(Y)=1-r+m.
\]
Hence we get
\[
2b_2^+(Y)+b_2^0(Y)=0.
\]
Since $b_2^+(Y)$ and $b_2^0(Y)$ are non-negative, the both of them are zero.
\end{proof}

Finally, we show that Corollary \ref{cor1} implies the following theorem.

\begin{thm}{\rm (Etnyre \cite{Et})}\label{thmet}
If $X$ is a symplectic filling of a contact 3-manifold supported by a planar open book, then $b_2^+(X)=b_2^0(X)=0$.
\end{thm}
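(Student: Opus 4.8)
The plan is to combine Corollary \ref{cor1} with the $J$-holomorphic curve techniques of Wendl \cite{We} and Niederkr\"uger--Wendl \cite{NWe}, which classify symplectic fillings of planar contact $3$-manifolds. The guiding observation is that the conclusion $b_2^+(X)=b_2^0(X)=0$ is a purely topological statement about the intersection form $Q_X$; in particular it is insensitive to symplectic deformations of $(X,\omega)$ that fix the underlying smooth $4$-manifold, and to the distinction between various notions of filling once the smooth manifold is pinned down.

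First I would reduce to the strong case. If $X$ is only a weak symplectic filling of $(M,\xi)$, then, since $\xi$ is supported by a planar open book, the result of Niederkr\"uger and Wendl \cite{NWe} permits modifying $\omega$ in a collar of $\partial X$ so as to obtain a strong symplectic filling of the same $(M,\xi)$ on the same smooth $4$-manifold. Hence we may assume from the outset that $X$ is a strong filling. Next I would pass to a minimal model: blowing down a maximal disjoint collection of symplectically embedded $(-1)$-spheres produces a minimal strong symplectic filling $X_0$ of $(M,\xi)$, and $X$ is recovered from $X_0$ by finitely many symplectic blow-ups. Each blow-up replaces the intersection form by its orthogonal direct sum with $\langle -1\rangle$, so $Q_X\cong Q_{X_0}\oplus\langle -1\rangle^{\oplus k}$ for some $k\ge 0$; consequently $b_2^+(X)=b_2^+(X_0)$ and $b_2^0(X)=b_2^0(X_0)$, and it suffices to treat $X_0$.

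Finally I would invoke Wendl's structure theorem \cite{We}: a minimal strong symplectic filling of a contact $3$-manifold supported by a planar open book admits, after a symplectic deformation, the structure of an allowable Lefschetz fibration over $D^2$ whose regular fiber is the page of the open book, hence a surface of the form $\Sigma_{0,r+1}$ with $r\ge 0$. Thus $X_0$ satisfies the hypotheses of Theorem \ref{thm1}, and Corollary \ref{cor1} gives $b_2^+(X_0)=b_2^0(X_0)=0$; combined with the previous paragraph this yields $b_2^+(X)=b_2^0(X)=0$, which is Theorem \ref{thmet}.

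The only steps that do any real work are the appeals to \cite{NWe} and \cite{We}; within this note everything else is bookkeeping, so the ``main obstacle'' is essentially organizational---making sure that the notion of filling used by Etnyre, the strong/weak fillings handled by Niederkr\"uger--Wendl and by Wendl, and the allowable Lefschetz fibrations of Theorem \ref{thm1} are mutually compatible, and that the blow-up reduction to a minimal filling is carried out carefully enough that the identities $b_2^+(X)=b_2^+(X_0)$ and $b_2^0(X)=b_2^0(X_0)$ are justified.
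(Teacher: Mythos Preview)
Your proposal is correct and follows essentially the same route as the paper: reduce via \cite{NWe} and \cite{We} to an allowable Lefschetz fibration over $D^2$ with planar fiber, apply Corollary~\ref{cor1}, and then observe that blowing up only adds $\langle -1\rangle$ summands so $b_2^+$ and $b_2^0$ are unaffected. The only cosmetic difference is that the paper packages the first two steps as ``symplectic filling $\Rightarrow$ blow-up of a Stein filling'' (via \cite{NWe}) and ``Stein filling $\Rightarrow$ allowable planar Lefschetz fibration'' (via \cite{We}), whereas you pass through a minimal strong filling; the content is the same.
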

\begin{proof}
Niederkr\"uger and Wendl \cite{NWe} proved that any symplectic filling of a contact manifold supported by a planar open book is  symplectically deformation equivalent to a blow up of a Stein filling.
 Moreover, Wendl \cite{We} proved that any Stein filling of a contact manifold supported by such an open book admits an allowable Lefschetz fibration over $D^2$ with planar fiber.
 Since the blowing up preserves the negative definiteness, we get Theorem \ref{thmet} from Corollary \ref{cor1}.
\end{proof}

\begin{rem}
Theorem \ref{thmet} implies Theorem \ref{thm1}:
 By Loi and Piergallini \cite{LP}, a Lefschetz fibration $f:Y\rightarrow D^2$ as in Theorem \ref{thm1} admits a Stein structure.
 Therefore $Y$ is a symplectic filling of the boundary $\partial Y$.
 The restriction of an allowable Lefschetz fibration to the boundary is an open book of $\partial Y$, then $\partial Y$ is a contact 3-manifold supported by a planar open book.
 Applying Theorem \ref{thmet} to $Y$, we obtain Theorem \ref{thm1}.  
\end{rem}

This paper is organized as follows.
 In Section \ref{s2}, we review a definition of Lefschetz fibrations and the statement of Wall's non-additivity formula.
 We also give the construction of Lefschetz fibrations and its handle decomposition.
 The proof of Theorem \ref{thm1} is given in Section \ref{s3}.

\section{Preliminaries}\label{s2}
\subsection{Lefschetz fibration}
\ 

Let $Y$ be a smooth, compact and oriented 4-manifold.
\begin{defn}
A {\it Lefschetz fibration} is a smooth map $f:Y\rightarrow D^2$ such that:
\begin{itemize}
 \item[(1)]$\{b_1,\ldots,b_m\}\subset {\rm Int} D^2$ are the critical values of $f$, with $p_i$ a unique critical point of $f$ on $f^{-1}(b_i)$, for each $i$, and
 \item[(2)]about each $p_i$ and $b_i$, there are local complex coordinate charts centered at $p_i$\ and $b_i$ compatible with the orientations of
$Y$ and $D^2$ such that $f$ can be expressed as $f(z_1,z_2)=z_1^2+z_2^2$.
\end{itemize}
A Lefschetz fibration $f:Y\rightarrow D^2$\ is called {\it allowable} if each vanishing cycle of the Lefschetz fibration represents a 
non-trivial homology class of the fiber.
\end{defn}
 In this paper, we always assume that given Lefschetz fibrations are allowable.

For a given oriented surface $\Sigma$,
 we can construct a Lefschetz fibration over $D^2$ with fiber $\Sigma$ as follows.
 We start with a trivial $\Sigma$-bundle $pr_2:\Sigma\times D^2\rightarrow D^2$.
 We choose a point $b_0\in\textrm{Int}D^2$ and identify $pr_2^{-1}(b_0)$ with $\Sigma$.
 We choose distinct points $b_1,\dots,b_m\in\textrm{Int}D^2-\{b_0\}$ and properly embedded simple closed curves $\gamma_1,\dots,\gamma_m$ such that $\gamma_i$ is a curve on $pr_2^{-1}(b_i)$ for each $i$.
 Gluing $\Sigma\times D^2$ and a 4-dimensional 2-handle $h_1$ along $\gamma_1$ with framing $-1$ relative to the product framing of $h_1$, we obtain a 4-manifold $(\Sigma\times D^2)\cup h_1$.
 This manifold admits a Lefschetz fibration over $D^2$ with fiber $\Sigma$, vanishing cycle $\gamma_1$ and critical value $b_1$.
 Continuing this process for $\gamma_2,\dots,\gamma_m$ with all of the framings $-1$ relative to the product framings of 2-handles $h_2,\dots,h_m$ counter-clockwisely from $b_0$, the resulting manifold $(\Sigma\times D^2)\cup h_1\cup\dots\cup h_m$admits a Lefschetz fibration $f:(\Sigma\times D^2)\cup h_1\cup\dots\cup h_m\rightarrow D^2$ with fiber $\Sigma$, vanishing cycles $\gamma_1,\dots,\gamma_m$ and critical values $b_1,\dots,b_m$.
 We can naturally consider the smooth fiber bundle
\[
f\mid_{f^{-1}(D^2-\{b_1,\dots,b_m\})}:f^{-1}(D^2-\{b_1,\dots,b_m\})\rightarrow D^2-\{b_1,\dots,b_m\}\ .
\]
The fundamental group $\pi_1(D^2-\{b_1,\dots,b_m\},b_0)$ of $m$ punctured disk is generated by loops around each puncture and the \textit{local monodromy} is defined as the monodromy along each of such loops.
The local monodromy around $b_i$ is the right handed Dehn twist along $\gamma_i$ (for more detail, see the book \cite{GS} by Gompf and Stipsicz).

By the above construction, we obtain a handle decomposition of a Lefschetz fibration.
 The unique 0-handle,
 some 1-handles (depending on the genus and the number of boundary components of the fiber) and some 2-handles.
  It is useful to compute the Euler charecteristic of the Lefschetz fibration and we use this in Section \ref{s3}.

\subsection{Wall's non-additivity}\label{wallsnonadd}
\ 

Now we explain a formula called Wall's non-additivity formula.
\\Let $Y,Y_-,Y_+$\ be $4$-manifolds, $X_0,X_-,X_+$\ be $3$-manifolds and $Z$\ be a $2$-manifold such that
\\$Y=Y_-\cup Y_+,$
\\$Y_-\cap Y_+=X_0,$
\\$\partial Y_\pm=X_\pm\cup X_0,$
\\$\partial X_\pm=\partial X_0=Z.$(Figure \ref{wallna})
\begin{figure}[ht!]
\labellist
\hair 2pt
\pinlabel $X_0$ [t] at 120 55
\pinlabel $X_+$ [t] at 35 85
\pinlabel $X_-$ [t] at 35 7
\pinlabel $Y_+$ [t] at 167 63
\pinlabel $Y_-$ [t] at 167 26
\pinlabel $Z$ [t] at -10 45
\pinlabel $Z$ [t] at 250 45
\endlabellist
\centering
\includegraphics{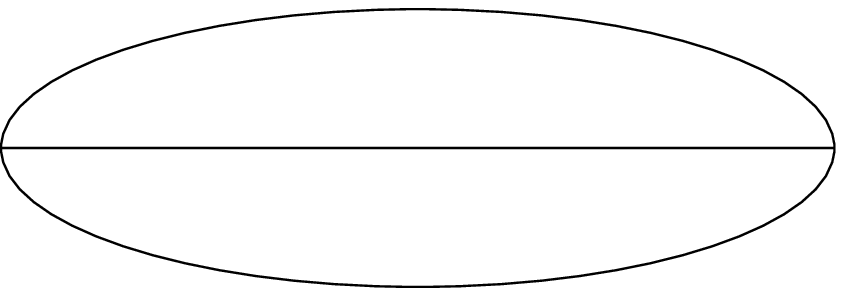}
\caption{}
\label{wallna}
\end{figure}
\\$Y$ induces the orientations of $Y_\pm$, and otherwise:
\\$\partial[Y_-]=[X_0]-[X_-],\ \partial[Y_+]=[X_+]-[X_0],\ \partial[X_0]=\partial[X_\pm]=[Z]$.

For each $\epsilon=0,+\ {\rm and}\ -$, let $i_\epsilon$\ be the inclusion $Z=\partial X_\epsilon\hookrightarrow X_\epsilon$ and $i_{\epsilon*}$ be the induced
map of $i_\epsilon$ to the first homology group. Then we define $V,L_\epsilon$ and $W$ as follows:
\begin{align}
&V=H_1(Z;\R),\nonumber\\
&L_\epsilon={\rm Ker}(i_\epsilon*),\nonumber\\
&W=\frac{L_-\cap(L_0+L_+)}{(L_-\cap L_0)+(L_-\cap L_+)}.\nonumber
\end{align}
Let $Q_Z$ be the intersection form of $Z$\ and we consider
 the bilinear map $\Psi':L_-\cap(L_0+L_+)\times L_-\cap(L_0+L_+)\rightarrow \R:\Psi'(a,a')=Q_Z(a,b')$. 
Here $a$ and $a'$ are elements of
 $L_-\cap(L_0+L_+)$
and $b'$\ is an element of $L_0$\ satisfying $a'+b'+c'=0$ for some $c'\in L_+$. 
One can see that $\Psi'$ induces a well-defined symmetric bilinear map $\Psi:W\times W\rightarrow \R$.
The signature of $\Psi$ is denoted by $\sigma(V;L_-,L_0,L_+)$. 
We also write the signature of a $4$-manifold $M$ as $\sigma(M)$.
 Wall \cite{Wa} proved the following theorem:
\begin{thm}{\rm (Wall \cite{Wa})}
\ $\sigma(Y)=\sigma(Y_-)+\sigma(Y_+)-\sigma(V;L_-,L_0,L_+)$
\end{thm}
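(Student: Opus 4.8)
The plan is to compute the intersection form of $Y$ on $H_2(Y;\R)$ directly from the decomposition $Y=Y_-\cup_{X_0}Y_+$ and to isolate the part on which additivity fails. The principal tools are the Mayer--Vietoris sequence associated with this decomposition, the long exact sequences of the pairs $(Y_\pm,X_0)$ and $(Y,\partial Y)$, and Poincar\'e--Lefschetz duality. Throughout, all homology is taken with real coefficients, and one uses that $Z$ is a closed oriented surface, so that $V=H_1(Z;\R)$ carries the non-degenerate skew form $Q_Z$ and each $L_\epsilon=\ker(i_{\epsilon*})$ is a Lagrangian subspace of $V$ by the half-lives--half-dies principle applied to the $3$-manifold $X_\epsilon$.

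First I would recall Novikov additivity as the baseline: classes in $H_2(Y;\R)$ carried by $H_2(Y_-;\R)$ and $H_2(Y_+;\R)$ meet each other trivially across $X_0$, and the restriction of $Q_Y$ to the subspace they span agrees with the orthogonal sum $Q_{Y_-}\oplus Q_{Y_+}$ modulo radicals; this contributes $\sigma(Y_-)+\sigma(Y_+)$ to the signature. The deviation from additivity is concentrated on a complementary subspace of ``anomalous'' classes, namely those represented by a sum $c_-+c_+$ of $2$-chains $c_\pm\subset Y_\pm$ sharing a common boundary $1$-cycle $\zeta\subset X_0$. Using the exact sequences above, I would identify the relevant space of such classes, taken modulo the radical of $Q_Y$ and modulo the additive part, with the quotient $W=\dfrac{L_-\cap(L_0+L_+)}{(L_-\cap L_0)+(L_-\cap L_+)}$: the cycle $\zeta$ pushed into $Z$ determines a class in $L_-\cap(L_0+L_+)$, and the ambiguities in the choice of $c_\pm$ produce exactly the denominator.

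The heart of the argument is to evaluate $Q_Y$ on two anomalous classes $a,a'$. I would isotope their representatives so that their intersections occur in a collar of $X_0$, and then push the computation onto the surface $Z$: choosing $b'\in L_0$ and $c'\in L_+$ with $a'+b'+c'=0$, where $b'$ and $c'$ record how the boundary cycle of $a'$ splits into pieces bounding in $X_0$ and in $X_+$, the geometric intersection number of $a$ with $a'$ in $Y$ reduces to the value $Q_Z(a,b')$. This is precisely the formula defining $\Psi'$, and hence the restriction of $Q_Y$ to the anomalous subspace descends to the well-defined symmetric form $\Psi$ on $W$, whose signature is $\sigma(V;L_-,L_0,L_+)$. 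Combining the two contributions and fixing the sign convention yields $\sigma(Y)=\sigma(Y_-)+\sigma(Y_+)-\sigma(V;L_-,L_0,L_+)$.

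The main obstacle is the geometric intersection computation in the third step together with the bookkeeping that makes it rigorous: one must show that the chain-level intersection count localizes to the collar of $X_0$ and then to $Z$ with no residual contribution, that the resulting pairing is independent of the choices (of chains $c_\pm$, of the splitting $a'+b'+c'=0$, and of the pushing into $Z$), and that it descends to $W$ as the claimed form $\Psi$. Verifying the Lagrangian property of the $L_\epsilon$ and the exactness statements needed to pin down the anomalous subspace as $W$ is routine but must be carried out carefully in order to control the radical of $Q_Y$ correctly.
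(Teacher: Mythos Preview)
The paper does not prove this theorem. It is quoted as a known result of Wall, with a citation to \cite{Wa}, and is used as a black box in the proof of Theorem~1.1. There is therefore no ``paper's own proof'' to compare your sketch against.

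Your outline is a recognisable account of the standard strategy behind Wall's original argument: split $H_2(Y;\R)$ via Mayer--Vietoris into a Novikov-additive part and an anomalous part, identify the anomalous part (modulo radicals) with $W$, and compute the restriction of $Q_Y$ there as $\Psi$ by localising intersections to a collar of $X_0$ and then to $Z$. As you yourself flag, the delicate point is making the localisation of the intersection number rigorous and checking independence of all choices; the sketch does not carry this out, so it remains a sketch rather than a proof. If you want to turn it into a full proof you would need to do that chain-level computation carefully, or alternatively follow the more algebraic route in Wall's paper (working with the pair sequences for $(Y,X_0)$ and $(Y_\pm,\partial Y_\pm)$ and Poincar\'e--Lefschetz duality to identify the relevant quotient directly), but for the purposes of this paper the citation suffices.
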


We mainly use Wall's formula to prove Theorem \ref{thm1}.

\section{Proof of main theorem}\label{s3}
In this section, we consider homology group with coefficient $\R$ and denote the homolgy classes of the curves on surfaces by the same symbols of those curves. 

Let $f:Y_+\rightarrow D^2$ be a Lefschetz fibration with fiber $\Sigma_{0,r+1}$ and vanishing cycles $\gamma_1,\ldots,\gamma_m$ on $\Sigma_{0,r+1}$.  
 Since local monodromies of a Lefschetz fibration fix the boundaries of regular fiber point wise, the smooth fiber bundle  
\[
f\mid_{\partial Y_+-{\rm Int}\ f^{-1}(\partial D^2)}:\partial Y_+-{\rm Int}\ f^{-1}(\partial D^2)\rightarrow D^2
\]
is a trivial 
$\underset{i=0}{\overset{r}{\coprod}}\partial D^2_i$-bundle over $D^2$. Here $D^2_i$ is a copy of $D^2$ for each $i$. 
We take a bundle isomorphism $\Phi:(\underset{i=0}{\overset{r}{\coprod}}\partial D^2_i)\times D^2\rightarrow\partial Y_+-{\rm Int}\ f^{-1}(\partial D^2)$
and define $Y,Y_-,X_0,X_\pm$ and $Z$ as follows:
\\$Y_-=(\underset{i=0}{\overset{r}{\coprod}}D^2_i)\times D^2,$
\\$X_+=f^{-1}(\partial D^2),$
\\$X_0=(\underset{i=0}{\overset{r}{\coprod}}\partial D^2_i)\times D^2\approx\partial Y_+-{\rm Int}\ f^{-1}(\partial D^2)
=\partial Y_+-{\rm Int}\ X_+,$
\\$X_-=(\underset{i=0}{\overset{r}{\coprod}} D^2_i)\times \partial D^2,$
\\$Y=Y_+\cup_\Phi Y_-,$
\\$Z=\partial X_0=(\underset{i=0}{\overset{r}{\coprod}}\partial D^2_i)\times \partial D^2,$
\\where $Y_+\cup_\Phi Y_-$ is the manifold obtained by gluing of $Y_+$ and $Y_-$ by $\Phi$. 

Under the above assumption, the following two propositions hold.

\begin{prop}\label{p1}
{\rm dim} $W=\sigma(V;L_-,L_0,L_+)$.
\end{prop}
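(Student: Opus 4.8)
The strategy is to understand all the ingredients appearing in Wall's correction term $\sigma(V;L_-,L_0,L_+)$ very concretely in this particular splitting, and show that the bilinear form $\Psi$ on $W$ is identically zero, so that its signature equals its dimension. Since $\sigma(\Psi) = \dim W$ forces $\Psi$ to be positive definite \emph{and} the rank of $\Psi$ to equal $\dim W$, the cleanest route is: (i) compute $\dim W$ explicitly from the topology of the pieces, and (ii) show $\Psi \equiv 0$ on $W$ — but that would give $\sigma(\Psi)=0$, not $\dim W$. So instead I expect the actual claim to be that $\Psi$ is \emph{positive definite} (equivalently $\sigma(V;L_-,L_0,L_+)=\dim W$ means $\Psi$ is nondegenerate and positive definite on $W$), and the proof must exhibit $\Psi$ as a positive-definite form. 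I would therefore first pin down $Z = (\coprod_{i=0}^r \partial D^2_i)\times \partial D^2$, a disjoint union of $r+1$ tori, so $V = H_1(Z) \cong \R^{2(r+1)}$ with the standard hyperbolic intersection form $Q_Z$ (each torus contributes a hyperbolic plane spanned by $[\partial D^2_i]$ and $[\text{second } \partial D^2]$ factor).

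Next I would compute the three Lagrangians. For $X_- = (\coprod D^2_i)\times \partial D^2$, the class $[\partial D^2_i]$ dies, so $L_-$ is spanned by the $r+1$ "longitude" classes coming from the second ($\partial D^2$) factor. For $X_0 = (\coprod \partial D^2_i)\times D^2$, the second-factor classes die, so $L_0$ is spanned by the $r+1$ classes $[\partial D^2_i]$. These two already span $V$ and meet trivially, so $L_- \cap L_0 = 0$ and $L_- + L_0 = V$; hence $L_-\cap(L_0+L_+) = L_-$ and $W = L_-/(L_-\cap L_+)$. The heart of the computation is $L_+ = \ker(i_{+*}: H_1(Z)\to H_1(X_+))$ where $X_+ = f^{-1}(\partial D^2)$ is the total space of the open book monodromy fibration over $\partial D^2$ with fiber $\Sigma_{0,r+1}$; here one uses the Wang sequence (or Mayer–Vietoris) for this mapping torus together with the fact that the monodromy is a product of right-handed Dehn twists along the vanishing cycles $\gamma_1,\dots,\gamma_m$. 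Concretely, $H_1(X_+)$ is computed from $H_1(\Sigma_{0,r+1}) \cong \R^r$ modulo the image of $(\phi_* - \mathrm{id})$ plus the new generator from the base circle; the classes in $Z$ that inject into $X_+$ versus those that die is governed by which boundary-of-fiber classes become nullhomologous, which in turn is controlled by $\langle\gamma_1,\dots,\gamma_m\rangle$.

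So the key steps, in order, are: (1) identify $V$, $Q_Z$ and $L_-,L_0$ explicitly and observe $L_-\cap L_0 = 0$, $L_-+L_0 = V$, reducing $W$ to $L_-/(L_-\cap L_+)$; (2) compute $H_1(X_+)$ via the Wang sequence for the open book, tracking the images of the torus classes, to determine $L_+$ and hence $L_-\cap L_+$, yielding $\dim W = \dim\langle\gamma_1,\dots,\gamma_m\rangle$ (matching what Corollary~\ref{corr} needs after combining with Proposition~\ref{p1}); (3) compute $\Psi$ on representatives: for $a,a'\in L_- = L_-\cap(L_0+L_+)$, write $a' = b' + (-c')$ with $b'\in L_0$, $c'\in L_+$ (possible since $L_0+L_+\supseteq L_0+L_- = V$), and evaluate $Q_Z(a,b')$; using that $a$ is a pure second-factor class and $b'$ a pure first-factor class, pair them off torus-by-torus to show $\Psi$ is (plus or minus) the standard positive-definite form in the appropriate basis, giving $\sigma(\Psi) = \dim W$.

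\emph{The main obstacle} I anticipate is step (2)–(3): correctly orienting everything so that $Q_Z(a,b')$ comes out \emph{positive} rather than negative on $W$ — this is where Wall's sign conventions, the orientation $Y$ induces on $Y_\pm$, and the framing $-1$ on the 2-handles all interact. Getting $\sigma(\Psi) = +\dim W$ rather than $-\dim W$ is the crux, and a careless orientation would flip the sign and break the eventual signature formula. A secondary technical point is verifying that $\Psi'$ descends to a well-defined \emph{nondegenerate} form on $W$ (the paper asserts well-definedness; one must check the rank is full), which amounts to checking that no nonzero class of $L_-$ not already in $L_-\cap L_+$ pairs trivially with all of $L_0$ — straightforward once the explicit bases are in hand, since $Q_Z$ restricted to (second-factor classes)$\times$(first-factor classes) is a perfect pairing on each torus.
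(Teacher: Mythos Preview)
Your overall strategy---show that $\Psi$ is positive definite on $W$---is exactly the paper's, but two concrete errors in your setup would derail the computation before you get there.

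First, you have $L_-$ and $L_0$ described backwards. By definition $L_\epsilon = \ker(i_{\epsilon*})$ is spanned by the classes that \emph{die} in $X_\epsilon$, not by those that survive. In $X_- = (\coprod_i D^2_i)\times \partial D^2$ the meridians $m_i = [\partial D^2_i \times \{*\}]$ bound, so $L_- = \bigoplus_{i=0}^r \R m_i$; in $X_0 = (\coprod_i \partial D^2_i)\times D^2$ the longitudes $l_i = [\{p_i\}\times \partial D^2]$ bound, so $L_0 = \bigoplus_{i=0}^r \R l_i$. You correctly identify which classes die in each piece but then place the \emph{surviving} classes into the kernel.

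Second, and more seriously, the implication ``$L_- + L_0 = V$, hence $L_- \cap (L_0 + L_+) = L_-$'' is a non sequitur: it would require $L_- \subseteq L_0 + L_+$, which does not follow from $L_-+L_0=V$ and is in fact false here. With the correct labels, projecting $L_+$ to the $m$-factor along $L_0$ gives only the span of $\sum_s Q_Z(\gamma_s,l_j)\gamma_s$ (for $j=1,\dots,r$) together with $\sum_{i=0}^r m_i$, and $L_-\cap(L_0+L_+)$ is exactly this span---typically a proper subspace of $L_-$ (for instance one-dimensional when $m=0$). The paper extracts this description of $W$ from a separate lemma that computes $L_+$ via the mapping-torus structure of $X_+$, and then shows $\Psi(\alpha_i,\alpha_j)=\sum_s Q_Z(\gamma_s,l_i)Q_Z(\gamma_s,l_j)$ on the resulting generators $\alpha_j=\sum_s Q_Z(\gamma_s,l_j)\gamma_s$; positive definiteness is then the sum-of-squares identity $\Psi(u,u)=\sum_s\bigl(\sum_j c_j Q_Z(\gamma_s,l_{i_j})\bigr)^2$. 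Your step (3) is headed in the right direction, but it cannot start from the incorrect identification $W = L_-/(L_-\cap L_+)$.
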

\begin{prop}\label{p2}
{\rm dim} $W=$ {\rm dim} $\langle\gamma_1,\dots,\gamma_m\rangle$
\end{prop}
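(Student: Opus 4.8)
The plan is to compute $V$, $L_-$, $L_0$, $L_+$ explicitly and then read off $\dim W$. Unwinding the definitions, $Z=(\coprod_{i=0}^r\partial D^2_i)\times\partial D^2$ is a disjoint union of $r+1$ tori, so $V=H_1(Z)=\bigoplus_{i=0}^r(\R a_i\oplus\R b_i)$ with $a_i=\partial D^2_i\times\{*\}$ and $b_i=\{*\}\times\partial D^2$. Both $X_-$ and $X_0$ are disjoint unions of solid tori: in $X_-=\coprod_i(D^2_i\times\partial D^2)$ the curve $a_i$ bounds a disk, while in $X_0\approx\coprod_i(\partial D^2_i\times D^2)$ the curve $b_i$ bounds a disk. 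Hence $L_-=\langle a_0,\dots,a_r\rangle$ and $L_0=\langle b_0,\dots,b_r\rangle$, so $L_-\cap L_0=0$ and $W=\bigl(L_-\cap(L_0+L_+)\bigr)/\bigl(L_-\cap L_+\bigr)$.

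The real content is the computation of $L_+=\ker\bigl(i_{+*}\colon H_1(Z)\to H_1(X_+)\bigr)$, where $X_+=f^{-1}(\partial D^2)$ is the mapping torus of the monodromy $\phi=t_{\gamma_m}\circ\cdots\circ t_{\gamma_1}$ of $\Sigma_{0,r+1}$. Planarity enters twice. First, the intersection form on $H_1(\Sigma_{0,r+1})$ vanishes identically, so by Picard--Lefschetz $\phi_*=\mathrm{id}$, and the Wang sequence of $\Sigma_{0,r+1}\to X_+\to S^1$ gives $H_1(X_+)\cong H_1(\Sigma_{0,r+1})\oplus\R[\mu]$ with $[\mu]$ the class of a section over $\partial D^2$. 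Under $i_{+*}$, the curve $a_i$ goes to the class $[\delta_i]$ of the $i$-th boundary component of the fiber (these span $H_1(\Sigma_{0,r+1})$ with the single relation $\sum_i[\delta_i]=0$), and $b_i$ goes to $[\mu]+c_i$ for some $c_i\in H_1(\Sigma_{0,r+1})$ with $c_0=0$. Second, representing $c_i$ by $\phi(\alpha_i)\cdot\overline{\alpha_i}$, where $\alpha_i$ is a properly embedded arc joining the $0$-th to the $i$-th boundary component, and using that $\gamma_j\cdot\gamma_k=0$ (so the intermediate twists leave intersection numbers with the $\gamma_k$ unchanged), one gets $c_i=\sum_{k=1}^m(\alpha_i\cdot\gamma_k)[\gamma_k]$. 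Feeding these into the formula for $W$ and solving the resulting linear conditions, one finds $L_-\cap L_+=\R(a_0+\cdots+a_r)$, of dimension $1$, and $\dim(L_-\cap(L_0+L_+))=\dim C+1$, where $C=\langle c_1,\dots,c_r\rangle\subseteq H_1(\Sigma_{0,r+1})$; hence $\dim W=\dim C$.

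It then remains to identify $\dim C$ with $\dim\langle\gamma_1,\dots,\gamma_m\rangle$, which I expect to be the crux. I would choose the arcs $\alpha_1,\dots,\alpha_r$ so that their classes in $H_1(\Sigma_{0,r+1},\partial)$ are dual, under the perfect Lefschetz-duality intersection pairing, to the standard basis $x_1,\dots,x_r$ of $H_1(\Sigma_{0,r+1})$ given by loops around the holes; then $\alpha_i\cdot\gamma_k$ is exactly the $i$-th coordinate of $[\gamma_k]$ in the $x$-basis, so the $r\times m$ matrix $N=(\alpha_i\cdot\gamma_k)$ is the transpose of the coordinate matrix $G$ of $[\gamma_1],\dots,[\gamma_m]$. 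With $q\colon\R^m\to H_1(\Sigma_{0,r+1})$, $e_k\mapsto[\gamma_k]$, we have $C=q\bigl(\mathrm{colspace}\,G\bigr)$; since $q$ is given by $G^{\!\top}$ and $\ker(G^{\!\top}G)=\ker G$, the restriction of $q$ to $\mathrm{colspace}\,G$ is injective, so $\dim C=\mathrm{rank}\,G=\dim\langle\gamma_1,\dots,\gamma_m\rangle$. The two points needing care are the precise form of $i_{+*}(b_i)$ — in particular that the correction terms $c_i$ recover the full span of the vanishing cycles and not merely a proper subspace — and the final rank bookkeeping.
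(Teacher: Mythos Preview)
Your proposal is correct and follows essentially the same route as the paper: you compute $L_-,L_0,L_+$ exactly as in the paper's Lemmas~3.3 and~3.4, identify the generators of $W$ with the classes $c_j=\sum_s(\alpha_j\cdot\gamma_s)[\gamma_s]$ (the paper's $\alpha_j=\sum_sQ_Z(\gamma_s,l_j)\gamma_s$), and then finish with the same linear-algebra fact $\operatorname{rank}(BB^{t})=\operatorname{rank}(B)$ over $\R$. The only cosmetic difference is that you compute $[\phi(\alpha_j)-\alpha_j]$ directly via the Lefschetz-duality pairing $H_1(\Sigma,\partial\Sigma)\times H_1(\Sigma)\to\R$, while the paper attaches a $1$-handle to close $\sigma_j$ into a curve on an auxiliary surface $\Sigma'\cong\Sigma_{1,r}$ before applying Picard--Lefschetz; the two computations are equivalent.
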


We see that Theorem \ref{thm1} follows from Propositions \ref{p1} and \ref{p2}.
\begin{proof}[Proof of Theorem \ref{thm1}]
By the above construction, $f$ can be naturally extended  to a Lefschetz fibration over $Y$ with regular fiber $S^2$.
 More precisely, $Y$ is diffeomorphic to $D^2\times S^2\#m\overline{\C\mathbb{P}^2}$.
 This implies that the signature of $Y$ is $-m$.
 Since the signature of $Y_-$ is obviously $0$, we have
\[
-m=\sigma(Y_+)-\sigma(V;L_-,L_0,L_+)=\sigma(Y_+)-\textrm{dim}\langle\gamma_1,\dots,\gamma_m\rangle
\]
from Wall's non-additivity formula and Propositions \ref{p1} and \ref{p2}.

\end{proof}

 We choose and fix a point $p_i$ in $\partial D^2_i$ for each $i=0,\dots,r$.
 We define the homology classes $m_i$ and $l_i$ in $H_1(Z)$ as 
$[\partial D^2_i\times\{*\}]$ and $[\{p_i\}\times\partial D^2]$, respectively,
 where $*$ is the point correspond to $0\in\R\slash\Z\approx\partial D^2$.
 Under this notation, we always assume that $m_i$ and $l_j$ are oriented such that $Q_Z(m_i,l_j)$ is $\delta_{ij}$. 
 
Before the proof of the Proposition \ref{p1} and Proposition \ref{p2}, we introduce a lemma.

\begin{lem}\label{lem1}
The vector space $W$ is spaned by
\[
\bigl\{\ \underset{s=1}{\overset{m}{\sum}}Q_Z(\gamma_s,l_1)\gamma_s,\dots,\underset{s=1}{\overset{m}{\sum}}Q_Z(\gamma_s,l_r)\gamma_s\bigr\}.
\]
For this generating set, $\Psi$ satisfies
\[
\Psi\left(\underset{s=1}{\overset{m}{\sum}}Q_Z(\gamma_s,l_i)\gamma_s\ ,\underset{s=1}{\overset{m}{\sum}}Q_Z(\gamma_s,l_j)\gamma_s\right)
=\underset{s=1}{\overset{m}{\sum}}Q_Z(\gamma_s,l_i)Q_Z(\gamma_s,l_j).
\]
\end{lem}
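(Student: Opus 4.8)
The plan is to set up the Wall decomposition pieces concretely and identify $L_-$, $L_0$, $L_+$ inside $V = H_1(Z;\R)$, where $Z = (\coprod_{i=0}^r \partial D^2_i) \times \partial D^2$ has basis $\{m_0,\dots,m_r,l_0,\dots,l_r\}$ with $Q_Z(m_i,l_j) = \delta_{ij}$ and $Q_Z(m_i,m_j) = Q_Z(l_i,l_j) = 0$. First I would compute each kernel. Since $X_- = (\coprod D^2_i)\times \partial D^2$, the inclusion $Z \hookrightarrow X_-$ kills the $m_i$ (the $\partial D^2_i$ factor bounds a disk), so $L_- = \langle l_0,\dots,l_r\rangle$. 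Since $X_0 = (\coprod \partial D^2_i)\times D^2$, the inclusion kills the $l_i$ (the $\partial D^2$ factor bounds), so $L_0 = \langle m_0,\dots,m_r\rangle$. The interesting one is $L_+ = \ker(i_{+*}\co H_1(Z) \to H_1(X_+))$, where $X_+ = f^{-1}(\partial D^2)$ is the mapping torus (open-book-complement piece). Using the handle description of $Y_+$ — one $0$-handle, $r$ $1$-handles, $m$ $2$-handles attached along the vanishing cycles $\gamma_1,\dots,\gamma_m$ — and the fact that $Z$ sits in $\partial Y_+$ as the union of the $l_i$-circles (push-offs of boundary curves of the fiber) and the $m_i$-circles (boundary-parallel curves $\partial D^2_i$ of the fiber $\Sigma_{0,r+1}$), I would show that in $H_1(X_+)$ the class $m_i$ is identified with the corresponding boundary class $[\partial_i]$ of the fiber, and $l_i$ is identified with the meridian/page-monodromy direction. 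The relations coming from the $2$-handles say precisely that $\sum_s Q_Z(\gamma_s, l_i)\,\gamma_s$-type combinations become nullhomologous; more carefully, each vanishing cycle $\gamma_s \in H_1(\Sigma_{0,r+1})$ expands as $\gamma_s = \sum_i Q_Z(\gamma_s, l_i) m_i$ (algebraic intersection of $\gamma_s$ with the $i$-th boundary, reading off the $m_i$-coefficient), and attaching the $2$-handle along $\gamma_s$ introduces no new relation among the $l_j$ while killing nothing among the $m_i$ directly — rather the monodromy identification in the mapping torus imposes that the $l_j$ are only defined modulo the span of the $\gamma_s$ pushed around. The cleanest route is to directly compute $H_1(X_+)$ via Mayer–Vietoris or the mapping-torus exact sequence $H_1(\Sigma_{0,r+1}) \xrightarrow{\,\mathrm{id} - \phi_*\,} H_1(\Sigma_{0,r+1}) \to H_1(X_+)$, where $\phi$ is the total monodromy (product of Dehn twists $\tau_{\gamma_1}\cdots\tau_{\gamma_m}$), noting $\mathrm{id} - \phi_*$ has image exactly $\langle \gamma_1,\dots,\gamma_m\rangle$; pulling this back to $Z$ identifies $L_+$.

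Having identified the three Lagrangian-type subspaces, the next step is to compute $W = \bigl(L_- \cap (L_0 + L_+)\bigr) / \bigl((L_- \cap L_0) + (L_- \cap L_+)\bigr)$. Here $L_- \cap L_0 = \langle l_i\rangle \cap \langle m_i\rangle = 0$. The term $L_- \cap L_+$ consists of those combinations $\sum a_i l_i$ that die in $H_1(X_+)$; since the $l_i$ correspond to the page/meridian directions which survive in $H_1(X_+)$ exactly when they are not forced dead by monodromy, and the monodromy only moves things in the $m_i$-directions, I expect $L_- \cap L_+$ to be small — generically $0$, and in general its dimension is $r - \dim\langle\gamma_1,\dots,\gamma_m\rangle$ or similar, which will have to be tracked. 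Meanwhile $L_- \cap (L_0 + L_+)$: an element $\sum a_i l_i$ lies in $L_0 + L_+$ iff modulo $\langle m_i\rangle$ it lies in (the $l$-part of) $L_+$. So I would show $W$ is isomorphic to the space of $(a_0,\dots,a_r)$ such that $\sum a_i l_i$ becomes, in $H_1(Z)/L_0 \cong \langle l_i\rangle$, a class hit by the projection of $L_+$. Concretely, because attaching the $2$-handle along $\gamma_s$ (with framing $-1$) modifies the gluing so that $l_i$-type curves get $\gamma_s$ added when dragged across, the image of $L_+$ in the $l$-quotient is spanned by the vectors whose $i$-th component records $Q_Z(\gamma_s, l_i)$ — i.e., the span of $\{(Q_Z(\gamma_s,l_0),\dots,Q_Z(\gamma_s,l_r)) : s = 1,\dots,m\}$. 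This produces the claimed spanning set $\bigl\{\sum_{s=1}^m Q_Z(\gamma_s, l_i)\,\gamma_s\bigr\}_{i=1}^r$ for $W$ (the index $i=0$ entry is dependent since $\sum_i Q_Z(\gamma_s, l_i) = 0$ as each $\gamma_s$ is separating the total intersection with all boundaries to zero).

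For the bilinear form $\Psi$: given representatives $a = \sum_{s} Q_Z(\gamma_s,l_i)\gamma_s$ and $a' = \sum_s Q_Z(\gamma_s,l_j)\gamma_s$ in $L_- \cap (L_0 + L_+)$, I must find $b' \in L_0 = \langle m_k\rangle$ and $c' \in L_+$ with $a' + b' + c' = 0$, then $\Psi(a,a') = Q_Z(a, b')$. The natural choice is $c' = -\sum_s Q_Z(\gamma_s, l_j)\,\hat\gamma_s$ where $\hat\gamma_s \in L_+$ is a lift of $\gamma_s$ (thinking of $\gamma_s$ both as a curve on the fiber — hence a combination of $m_k$'s, since on $\Sigma_{0,r+1}$ one has $H_1$ spanned by boundary classes — and as living in $L_+$ via the handle). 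Then $b'$ is forced, and $Q_Z(a,b')$ unwinds: using $Q_Z(m_k, l_l) = \delta_{kl}$ and $\gamma_s = \sum_k Q_Z(\gamma_s, l_k) m_k$, one gets $Q_Z(a, b') = \sum_s Q_Z(\gamma_s, l_i) Q_Z(\gamma_s, l_j)$ after the dust settles. I expect the main obstacle to be the second half of Proposition (and Lemma \ref{lem1}): rigorously pinning down how $L_+$ sits in $V$, i.e., correctly computing $i_{+*}\co H_1(Z) \to H_1(X_+)$ from the handle/monodromy data and verifying the sign conventions so that the $-1$-framings and the orientation normalization $Q_Z(m_i,l_j) = \delta_{ij}$ conspire to give exactly $Q_Z(\gamma_s, l_i)$ as coefficients with no stray signs — this is where Wall's formula's bookkeeping (the choices of $b'$, $c'$ and the asymmetry between $L_-$, $L_0$, $L_+$) interacts most delicately with the topology. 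Once Lemma \ref{lem1} is in hand, Proposition \ref{p1} follows because the Gram matrix $\bigl(\sum_s Q_Z(\gamma_s,l_i)Q_Z(\gamma_s,l_j)\bigr)_{i,j}$ is $A^\top A$ for the matrix $A = (Q_Z(\gamma_s, l_i))_{s,i}$, hence positive semidefinite with rank equal to $\mathrm{rank}\,A = \dim W$, so $\sigma(\Psi) = \dim W$; and Proposition \ref{p2} follows because $\dim W = \mathrm{rank}\,A = \dim\langle\gamma_1,\dots,\gamma_m\rangle$ (the map $\gamma_s \mapsto (Q_Z(\gamma_s,l_i))_i$ is injective on $\langle\gamma_1,\dots,\gamma_m\rangle \subset H_1(\Sigma_{0,r+1})$ since boundary-intersection numbers determine a class on a planar surface).
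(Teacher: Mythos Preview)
Your overall strategy matches the paper's: compute $L_-$, $L_0$, $L_+$ explicitly, then read off $W$ and $\Psi$. The identification of $L_+$ via the mapping-torus exact sequence is exactly what the paper does (this is the content of its Lemma~\ref{lem2}). However, there is a genuine error at the very first step that propagates through everything afterward.

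You correctly observe that in $X_- = (\coprod_i D^2_i)\times \partial D^2$ the classes $m_i = [\partial D^2_i \times \{*\}]$ bound, and that in $X_0 = (\coprod_i \partial D^2_i)\times D^2$ the classes $l_i = [\{p_i\}\times \partial D^2]$ bound. But $L_\epsilon$ is the \emph{kernel} of $i_{\epsilon*}$, so the classes that bound are precisely the ones that lie in $L_\epsilon$. Hence $L_- = \langle m_0,\dots,m_r\rangle$ and $L_0 = \langle l_0,\dots,l_r\rangle$, the opposite of what you wrote. This is not a harmless relabeling: the Maslov triple index changes sign under a transposition of two of its arguments, and more concretely your later claim that $W$ is spanned by the classes $\alpha_j = \sum_s Q_Z(\gamma_s,l_j)\gamma_s$ becomes incoherent, since each $\gamma_s$ is a combination of the $m_i$ and hence $\alpha_j \in \langle m_i\rangle$, which under your swapped convention is $L_0$, not $L_-$. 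Your parenthetical worry that $L_-\cap L_+$ might have dimension ``$r - \dim\langle\gamma_1,\dots,\gamma_m\rangle$ or similar'' is another symptom of the swap; with the correct assignments one gets simply $L_-\cap L_+ = \mathbb{R}\bigl(\sum_{i=0}^r m_i\bigr)$, so this term is one-dimensional and easy to handle.

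Once the roles of $L_-$ and $L_0$ are corrected, the computation of $\Psi$ becomes much cleaner than your sketch suggests: with the paper's $L_+$ (generated by $l_j - l_0 + \alpha_j$ for $j=1,\dots,r$ together with $\sum_i m_i$), one has the decomposition $\alpha_j + (l_j - l_0) - (l_j - l_0 + \alpha_j) = 0$ with $\alpha_j\in L_-$, $l_j - l_0\in L_0$, $-(l_j-l_0+\alpha_j)\in L_+$, so by definition $\Psi(\alpha_i,\alpha_j) = Q_Z(\alpha_i,\,l_j - l_0) = Q_Z(\alpha_i,l_j) = \sum_s Q_Z(\gamma_s,l_i)Q_Z(\gamma_s,l_j)$, with no need for an auxiliary lift $\hat\gamma_s$ or for any ``dust to settle.'' Your version of this step, with $c' = -\sum_s Q_Z(\gamma_s,l_j)\hat\gamma_s$ for some unspecified $\hat\gamma_s\in L_+$, does not actually produce an element of $L_+$ as described and does not yield the formula without further argument.
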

We prove Proposition \ref{p1} by using Lemma \ref{lem1}.

\begin{proof}[Proof of Proposition \ref{p1}]
We note that Proposition \ref{p1} is equivalent to the positive definiteness of $\Psi$. 
Let $\alpha_j$ be the homology class $\underset{s=1}{\overset{m}{\sum}}Q_Z(\gamma_s,l_j)\gamma_s$ and dim $W=l$.
 Since $\{\alpha_1,\dots\alpha_r\}$ generates $W$, we can write the basis of $W$ as $\{\alpha_{i_1},\dots,\alpha_{i_l}\}$ for some $1\leq i_1<\dots<i_l\leq r$.
 We take an arbitrary $u\in W$ and write it as 
$\underset{j=1}{\overset{l}{\sum}}c_j\alpha_{i_j}\ (c_j\in\R)$. Then we get
\begin{align}
\Psi(u,u)&=\Psi(\ \underset{j=1}{\overset{l}{\sum}}c_j\alpha_{i_j},\underset{k=1}{\overset{l}{\sum}}c_k\alpha_{i_k})
=\underset{j,k}{\sum}c_jc_k\Psi(\alpha_{i_j},\alpha_{i_k})\nonumber\\
&=\underset{j,k}{\sum}c_jc_k\underset{s=1}{\overset{m}{\sum}}Q_Z(\gamma_s,l_{i_j})Q_Z(\gamma_s,l_{i_k})\nonumber\\
&=\underset{s=1}{\overset{m}{\sum}}\left(\underset{j=1}{\overset{l}{\sum}}c_jQ_Z(\gamma_s,l_{i_j})\right)^2\geq0\nonumber
\end{align}
Therefore, if we assume $\Psi(u,u)=0$, $\underset{j=1}{\overset{l}{\sum}}c_jQ_Z(\gamma_s,l_{i_j})$ is 0 for each $s\in\{1,\dots,m\}$.
 Then we get
\[
u=\underset{j=1}{\overset{l}{\sum}}c_j\alpha_{i_j}=
\underset{j=1}{\overset{l}{\sum}}c_j\underset{s=1}{\overset{m}{\sum}}Q_Z(\gamma_s,l_{i_j})\gamma_s
=\underset{s=1}{\overset{m}{\sum}}\left(\underset{j=1}{\overset{l}{\sum}}c_jQ_Z(\gamma_s,l_{i_j})\right)\gamma_s=0.
\]
This means that $\Psi$ is positive definite.

\end{proof}

Next, we prove Proposition \ref{p2}.
\begin{proof}[Proof of Proposition \ref{p2}]
We set $\{e_1,\dots,e_r\}$ the standard basis of $\R^r$ and define the isomorphism
 $\phi:\underset{i=1}{H_1(\Sigma_{0,r+1})=\overset{r}{\oplus}}\R m_i\rightarrow\R^r:m_i\mapsto e_i.$
Let $A$ and $B$ be the matrices defined by $(\phi(\alpha_1),\dots,\phi(\alpha_r))$ and $(\phi(\gamma_1),\dots,\phi(\gamma_m))$, respectively.
The statement that the dimension of $W$ is equal to the dimension of $\langle\gamma_1,\dots,\gamma_m\rangle$ is equiverent to that the rank of $A$ is equal to the rank of $B$. Now we write $\gamma_s$ by $\underset{i=1}{\overset{r}{\sum}}x_{is}m_i\ (x_{is}\in\R)$,
 we have
\[
Q_Z(\gamma_s,l_k)=Q_Z\left(\underset{i=1}{\overset{r}{\sum}}x_{is}m_i,l_k\right)=\underset{i=1}{\overset{r}{\sum}}x_{is}\delta_{ik}=x_{ks}.
\]
Recall that, by definition, the intersection number of $m_i$ and $l_k$ in $Z$ is $\delta_{ik}$. Then we get the expression for
 $\alpha_j$ as $\underset{i=1}{\overset{r}{\sum}}\left(\underset{s=1}{\overset{m}{\sum}}x_{js}x_{is}\right)m_i$ with respect to the generating set $\{m_1,\dots,m_r\}$ and this implies that 
\[
A=\left(\underset{s=1}{\overset{m}{\sum}}x_{js}x_{is}\right)_{i,j}.
\]
On the other hand, this is written as
\[
B\hspace{1.5pt}{}^t\!B=
\begin{pmatrix}
x_{11}&x_{12}&\ldots&x_{1m}\\
x_{21}&x_{22}&\ldots&x_{2m}\\
\vdots&\vdots&\ddots&\vdots\\
x_{r1}&x_{r2}&\ldots&x_{rm}\\
\end{pmatrix}
\begin{pmatrix}
x_{11}&x_{21}&\ldots&x_{r1}\\
x_{12}&x_{22}&\ldots&x_{r2}\\
\vdots&\vdots&\ddots&\vdots\\
x_{1m}&x_{2m}&\ldots&x_{mr}\\
\end{pmatrix}
=\left(\underset{s=1}{\overset{m}{\sum}}x_{is}x_{js}\right)_{i,j}=A.
\]
For any real matrix $M$, the rank of $M$ coincides with that of $M\transp M$ (See \cite{L} by Liebeck).
 Thus Proposition \ref{p2} holds.
\end{proof}

Next, we introduce the following lemma for the proof of Lemma \ref{lem1}. 

Let $\Sigma$ be the surface $\Sigma_{g,r+1}$ for $g\in\Z_{\geq0}$.
 The boundary of $\Sigma$ can be identified with the disjoint union of $r+1$ copies of $S^1$.
 The boundary of $D_i^2$ can be also identified with $S^1$.
 The point $p_i\in\partial D^2_i$, defined in the first of this section, can be considered as a point of $\partial\Sigma$. 
 We set proper embedded arcs $\sigma_{1},\dots,\sigma_{r}$ which connect $p_0$ to $p_1,\dots,p_r$, respectively. 
For a given orientation preserving diffeomorphism $\phi$ of $\Sigma$, $\Sigma_\phi$ denotes the quotient space $\Sigma\times[0,1]/(x,1)\sim(\phi(x),0)$.
 We take a set $\{a_i,b_j,m_k\}$ of generators of $H_1(\Sigma)$ such that $Q_{\Sigma}(a_i,b_j)=\delta_{ij}$ and $Q_{\Sigma}(m_k,\cdot)=0$ for $i,j=1,\dots,g$ and $k=0,\dots,r$\ (Fig \ref{fig2}). 
\begin{lem}\label{lem2}
Let $\gamma_1,\dots,\gamma_m$ be simple closed curves in {\rm Int} $\Sigma$ and $\phi$ the product of Dehn twists $D(\gamma_m)\circ\dots\circ D(\gamma_1)$. 
We express the first homology group of $\Sigma_\phi$ as $\langle a_1,b_1,\dots,a_g,b_g,m_1,\dots,m_r,l_0\mid a_i=\phi_*(a_i),b_i=\phi_*(b_i)\ (i=1,\dots,g)\rangle$ so that the inclusion map $i$ from $\partial\Sigma_{\phi}$ to $\Sigma_{\phi}$ satisfying the following conditions:
\begin{enumerate}
\item The set $\{m_0,l_0,\dots,m_r,l_r\}$ is a basis of $H_1(\partial\Sigma_\phi)$ and satisfies $Q_{\partial\Sigma_{\phi}}(m_i,l_j)=\delta_{ij}$, 
\item $i_*(m_0)=-\underset{j=1}{\overset{r}{\sum}}m_j$,
\item $i_*(m_j)=m_j\ (j\neq0)$,
\item $i_*(l_0)=l_0$.
\end{enumerate}
Then we have
\[
i_*(l_j)=l_0+[\phi_*(\sigma_j)-\sigma_j]\ (j\neq0).
\]
If g is 0, this can be written as:
\[
i_*(l_j)=l_0-\underset{s=1}{\overset{m}{\sum}}Q_{\partial\Sigma_\phi}(\gamma_s,l_j)\gamma_s\ (j\neq0),
\]
where $i_*$ denotes the induced map of the inclusion $i$ to the homology group.
\end{lem}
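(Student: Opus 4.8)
The plan is to compute the induced map $i_*$ on first homology directly from the mapping torus construction, using the standard presentation of $H_1(\Sigma_\phi)$ and tracking how the longitudes $l_j$ are built. First I would fix the geometric picture: the mapping torus $\Sigma_\phi = \Sigma\times[0,1]/{\sim}$ contains, for each boundary component $\partial_j\Sigma$, a boundary torus $\partial_j\Sigma\times[0,1]/{\sim}$ (since $\phi$ fixes $\partial\Sigma$ pointwise), with meridian $m_j = [\partial_j\Sigma\times\{*\}]$ and longitude $l_j = [\{p_j\}\times[0,1]]$. The longitude $l_0$ through the base point $p_0$ plays a distinguished role. The key observation is that for $j\neq 0$, the loop $l_j$ in $\partial\Sigma_\phi$ is homologous in $\Sigma_\phi$ to a loop obtained by going up the vertical segment at $p_0$ (contributing $l_0$), then across the arc $\phi_*(\sigma_j)$ in the fiber over $1\sim 0$, then back down the vertical segment at $p_j$ (contributing $-l_j$... no — rather one closes up using $\sigma_j$ in the fiber over $0$). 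Concretely, the cycle $l_0 + \sigma_j\times\{0\} + l_j^{-1} + \phi_*(\sigma_j)^{-1}\times\{1\}$ bounds the "tube" $\sigma_j\times[0,1]$, and since in $\Sigma_\phi$ the fibers over $0$ and $1$ are identified via $\phi$, one gets the relation $i_*(l_j) = l_0 + [\phi_*(\sigma_j) - \sigma_j]$ in $H_1(\Sigma_\phi)$, where $\phi_*(\sigma_j)-\sigma_j$ is a genuine $1$-cycle in the fiber $\Sigma$ (its boundary $\phi(p_j)-\phi(p_0)-(p_j-p_0) = 0$). This establishes the general formula.

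Next I would specialize to $g=0$. When $\Sigma = \Sigma_{0,r+1}$ is planar, $H_1(\Sigma)$ is freely generated by $m_1,\dots,m_r$ and the intersection form $Q_\Sigma$ vanishes identically, so the class $[\phi_*(\sigma_j)-\sigma_j]\in H_1(\Sigma;\R)$ must be expressed in the $m_i$. To identify its coefficients I would compute, for each simple closed curve $\gamma_s$, the intersection number of $\gamma_s$ with the $1$-cycle $\phi_*(\sigma_j)-\sigma_j$; equivalently, I track the effect of each Dehn twist $D(\gamma_s)$ on the arc $\sigma_j$. A single right-handed Dehn twist $D(\gamma_s)$ changes the homology class of an arc $\sigma$ by $-Q(\gamma_s,\sigma)\,\gamma_s$ (the arc, after the twist, differs from the original by $\pm$ copies of $\gamma_s$, one for each signed intersection point with $\gamma_s$). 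Composing the $m$ twists and using that the $\gamma_s$ are disjoint from one another in the planar case so the intersection numbers with $\sigma_j$ do not interfere — more carefully, summing the successive contributions — one obtains $[\phi_*(\sigma_j)-\sigma_j] = -\sum_{s=1}^m Q_{\partial\Sigma_\phi}(\gamma_s,l_j)\,\gamma_s$, after identifying $Q(\gamma_s,\sigma_j)$ with $Q_{\partial\Sigma_\phi}(\gamma_s, l_j)$ via the fact that the arc $\sigma_j$ closes up to the longitude $l_j$ modulo boundary tori. Substituting into the general formula gives the stated identity.

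The main obstacle I expect is bookkeeping of orientations and signs: pinning down the sign in "one Dehn twist changes $[\sigma]$ by $-Q(\gamma_s,\sigma)\gamma_s$" consistently with the chosen orientations of $m_i$, $l_j$ (normalized so $Q_{\partial\Sigma_\phi}(m_i,l_j)=\delta_{ij}$) and with the convention that local monodromies are right-handed Dehn twists, and making sure the intersection number $Q(\gamma_s,\sigma_j)$ taken in the fiber really equals $Q_{\partial\Sigma_\phi}(\gamma_s,l_j)$ under the identification of $\sigma_j$ with (a relative version of) $l_j$. A secondary subtlety is verifying that the presentation of $H_1(\Sigma_\phi)$ stated in the lemma — including items (2)–(4) describing $i_*$ on $m_0,\dots,m_r,l_0$ — is the correct normalization forced by the Mayer–Vietoris / mapping-torus exact sequence, so that the formula for $i_*(l_j)$ is internally consistent; this is a routine but careful check using $H_1(\Sigma_\phi)\cong \mathrm{coker}(\phi_*-\mathrm{id})\oplus\ker(\phi_*-\mathrm{id})$ together with the long exact sequence of the pair. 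Once signs are fixed, the computation for the $g=0$ case is a short intersection-theoretic argument, and the general formula follows from the "tube bounds" cycle relation described above.
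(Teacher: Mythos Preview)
Your argument for the general formula $i_*(l_j)=l_0+[\phi_*(\sigma_j)-\sigma_j]$ is the same as the paper's: both use that the rectangle $\sigma_j\times[0,1]$ is a $2$-chain in $\Sigma\times[0,1]$ whose boundary, pushed down to $\Sigma_\phi$, gives exactly the relation $l_j\sim l_0+(\phi_\#(\sigma_j)-\sigma_j)$.

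For the $g=0$ specialization, your approach diverges from the paper's and contains a slip. You write ``using that the $\gamma_s$ are disjoint from one another in the planar case so the intersection numbers with $\sigma_j$ do not interfere'': this is false, the vanishing cycles are arbitrary simple closed curves and need not be disjoint. What actually makes the telescoping sum $[\phi_\#(\sigma_j)-\sigma_j]=\sum_k [D(\gamma_k)(\tau_{k-1})-\tau_{k-1}]$ (with $\tau_{k-1}=D(\gamma_{k-1})\cdots D(\gamma_1)(\sigma_j)$) collapse to $-\sum_s Q(\gamma_s,\sigma_j)\gamma_s$ is that the \emph{algebraic} intersection number $Q_\Sigma(\gamma_k,\gamma_i)$ vanishes for any two closed curves on a planar surface, so the accumulated correction $\tau_{k-1}-\sigma_j$, being a combination of the $\gamma_i$, pairs trivially with $\gamma_k$. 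You do state $Q_\Sigma\equiv 0$ earlier, so the ingredient is there; but the reasoning as written attributes the cancellation to the wrong cause.

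The paper handles this step differently and more cleanly: it attaches a $1$-handle $h_1$ to $\Sigma_{0,r+1}$ joining neighborhoods of $p_0$ and $p_j$, producing $\Sigma'\cong\Sigma_{1,r}$, and lets $\sigma_j'$ be the core of $h_1$. Then $[\sigma_j-\sigma_j']$ is an honest closed class in $H_1(\Sigma')$, to which the ordinary Picard--Lefschetz formula applies; the induction on the number of twists goes through because the cross-term $Q_{\Sigma'}(\gamma,\alpha)$ vanishes, $\alpha$ and $\gamma$ both lying in $H_1(\Sigma_{0,r+1})\subset H_1(\Sigma')$ where the form is zero. This device sidesteps having to formulate and justify a ``Picard--Lefschetz for arcs'', and it makes the identification $Q_{\Sigma'}(\gamma_s,[\sigma_j-\sigma_j'])=-Q_{\partial\Sigma_\phi}(\gamma_s,l_j)$ explicit via the coordinates $\gamma_s=\sum a_i m_i$. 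Your direct approach can be made to work, but you should replace the disjointness claim by the vanishing-of-$Q_\Sigma$ argument and spell out the identification of $Q(\gamma_s,\sigma_j)$ with $Q_{\partial\Sigma_\phi}(\gamma_s,l_j)$.
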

\begin{rem}
For the case $g=0$, the basis of $H_1(\partial\Sigma_\phi)$ and $H_1(\Sigma_{0,r+1})$ are $\{m_j,l_j\mid j=0,\dots,r\}$ and $\{m_j\mid j=1,\dots,r\}$, respectively.
 Thus we can naturally think of $H_1(\Sigma_{0,r+1})$ as a subgroup of $H_1(\partial\Sigma_\phi)$.
 Thus $Q_{\partial\Sigma_\phi}(\gamma_s,l_j)$ can be defined. 
\end{rem}

\begin{figure}[ht!]
\labellist
\hair 2pt
\pinlabel $a_1$ [t] at 65 145
\pinlabel $b_1$ [t] at 98 99
\pinlabel $a_g$ [t] at 215 162
\pinlabel $\vdots$ [l] at 300 62
\pinlabel $b_g$ [t] at 243 113
\pinlabel $\dots$ [r] at 143 72
\pinlabel $m_0$ [l] at 315 123
\pinlabel $m_1$ [l] at 315 92
\pinlabel $m_r$ [l] at 315 23
\endlabellist
\centering
\includegraphics{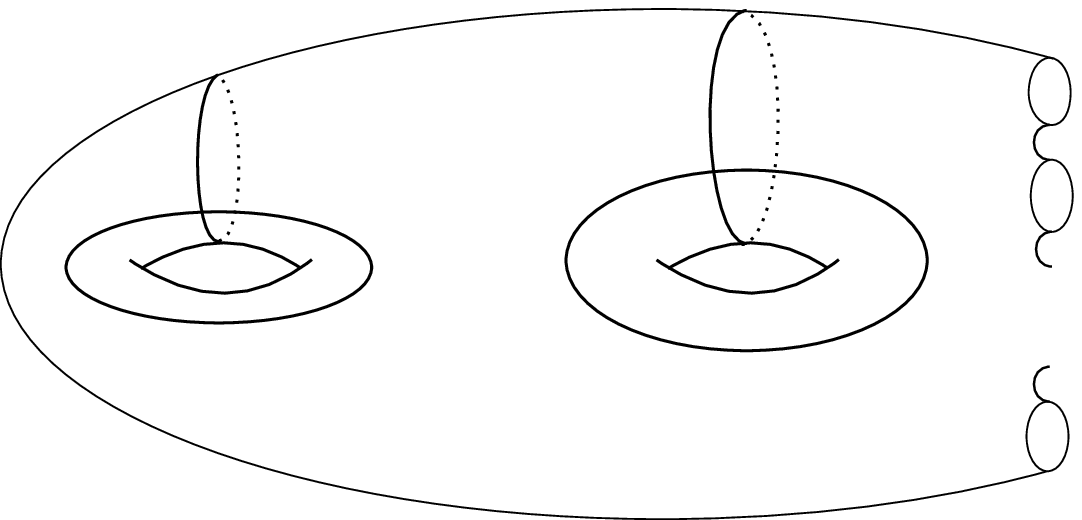}
\caption{}
\label{fig2}
\end{figure}

We prove Lemma \ref{lem1} by using Lemma \ref{lem2}.
\begin{proof}[Proof of Lemma \ref{lem1}]
We already note that $H_1(Z)$ is generated by $m_j$ and $l_j$ in the first part of this section, and by the definitions, $L_{0}$ and $L_{-}$
is given by:
\begin{align}
&L_-=\mbox{Ker}(i_{-*}:H_1(Z)\rightarrow H_1(X_-))
=\underset{i=0}{\overset{r}{\oplus}}\mathbb{R}m_i,\nonumber\\
&L_0=\mbox{Ker}(i_{0*}:H_1(Z)\rightarrow H_1(X_0))=\underset{i=0}{\overset{r}{\oplus}}\mathbb{R}l_i.\nonumber
\end{align}

By the assumption, the 3-manifold $X_+$ is $\Sigma_{\phi}$ in Lemma \ref{lem2} of the case $g=0$.
Then we apply Lemma \ref{lem2}, we get
\[
L_+={\rm Ker}\ i_{+*}=\langle\ l_1-l_0+\underset{s=1}{\overset{m}{\sum}}Q_Z(\gamma_s,l_1)\gamma_s,\ldots
,l_r-l_0+\underset{s=1}{\overset{m}{\sum}}Q_Z(\gamma_s,l_r)\gamma_s,\underset{i=0}{\overset{r}{\sum}}m_i\ \rangle.
\]
By these results, the subspaces $L_-\cap(L_0+L_+),L_-\cap L_0$ and $L_+$ of $H_1(Z)$ are given by:
\begin{align}
&L_-\cap(L_0+L_+)=\langle\ \underset{s=1}{\overset{m}{\sum}}Q_Z(\gamma_s,l_1)\gamma_s,\ldots,
\underset{s=1}{\overset{m}{\sum}}Q_Z(\gamma_s,l_r)\gamma_s,\underset{i=0}{\overset{r}{\sum}}m_i\ \rangle,\nonumber\\
&L_-\cap L_0=\{0\},\ L_-\cap L_+=\mathbb{R}\left(\underset{i=0}{\overset{r}{\sum}}m_i\right).\nonumber
\end{align}
Therefore the vector space $W$, the quotient of $L_-\cap(L_0+L_+)$ by $L_-\cap L_0+L_-\cap L_+$, can be writen as that in the statement.

Next we consider the bilinear form $\Psi$. 
For each $j$, we denote the element $\underset{s=1}{\overset{m}{\sum}}Q_Z(\gamma_s,l_j)\gamma_s$ by $\alpha_j$. 
By the obvious facts that  
$\alpha_j\in L_-,\ l_j-l_0\in L_0,\ l_j-l_0+\alpha_j\in L_+$ and $\alpha_j+(l_j-l_0)-(l_j-l_0+\alpha_j)=0$, $\Psi$ satisfies 
\[
\Psi(\alpha_i,\alpha_j)=Q_Z(\alpha_i,l_j-l_0)=Q_Z(\alpha_i,l_j)=\underset{s=1}{\overset{m}{\sum}}Q_Z(\gamma_s,l_i)Q_Z(\gamma_s,l_j).
\]

\end{proof}

We lastly prove Lemma \ref{lem2}.
\begin{proof}[Proof of Lemma \ref{lem2}]
The expression of $H_1(\Sigma_\phi)$ can be easily induced by the long exact sequence proved in Hatcher's book \cite{H} at Example 2.48:

Let $L$ and $M$ be topological spaces and $f$ and $g$ maps from $L$ to $M$. 
Define an equivalence relation $\sim$ on $L\times[0,1]\cup M$ by $(x,0)\sim f(x),(x,1)\sim g(x)$ and $N$ by the quotient of $L\times[0,1]\cup M$ by $\sim$. Then the following sequence is exact:
\[
\cdots\rightarrow H_n(L)\xrightarrow{f_*-g_*}H_n(M)\xrightarrow{i_*}H_n(N)\rightarrow H_{n-1}(L)\rightarrow\cdots\label{hatcher},
\]
where $i_*$ denotes the induced map of $M\hookrightarrow N$.

We use this sequence for $L=M=\Sigma,f=\textrm{id}$ and $g=\phi$.
 Consequently, $N$ becomes the mapping torus $\Sigma_\phi$. 

Next we prove the statement for fixed $j\in\{1,\dots,r\}$. 
We denote by $\pi$ the projection $\Sigma\times[0,1]\rightarrow\Sigma_\phi$.
 Let $c$ be a simplicial 2-chain expressing $\sigma_j\times[0,1]\subset\Sigma\times[0,1]$ and $l_0'$ and $l_j'$ simplicial 1-chains expressing $\{p_0\}\times[0,1]$ and $\{p_j\}\times[0,1]\subset\Sigma\times[0,1]$, respectively. We orient these chains so that 
$\partial c=\sigma_j\times\{1\}-l_j'-\sigma_j\times\{0\}+l_0'$ (Fig \ref{fig3}).
\begin{figure}[ht!]
\labellist
\hair 2pt
\pinlabel $\sigma_j\times\{0\}$ [t] at 85 40
\pinlabel $\vdots$ [t] at 35 131
\pinlabel $\vdots$ [t] at 35 83
\pinlabel $\vdots$ [t] at 310 131
\pinlabel $\vdots$ [t] at 310 83
\pinlabel $l_j'$ [t] at 167 113
\pinlabel $l_0'$ [t] at 167 26
\pinlabel $\Sigma\times\{1\}$ [l] at 280 -9
\pinlabel $\Sigma\times\{0\}$ [l] at 5 -9
\pinlabel $\sigma_j\times\{1\}$ [t] at 360 40
\endlabellist
\centering
\includegraphics[scale=0.8]{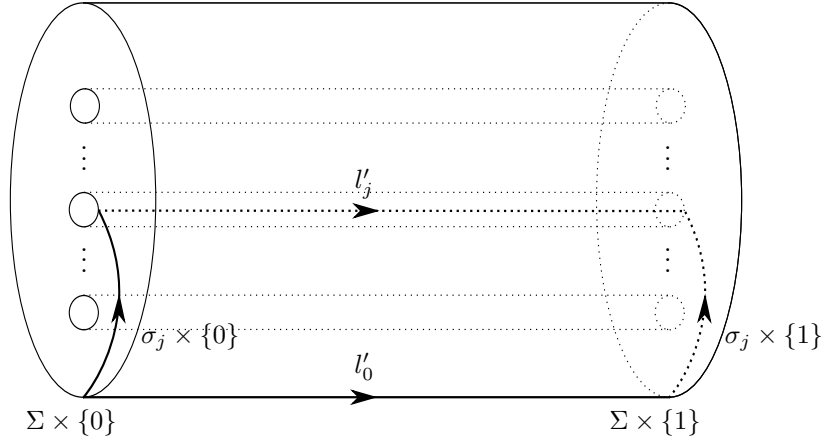}
\caption{$\Sigma\times[0,1]$ and simplicial chains}
\label{fig3}
\end{figure}
Then the homology class $[\sigma_j\times\{1\}-l_j'-\sigma_j\times\{0\}+l_0']$ is zero in $H_1(\Sigma\times[0,1])$. 
By the definition of $\Sigma_\phi$, the class represented by $\sigma_j\times\{1\}$ is equal to the class represented by 
$\phi_\#(\sigma)\times\{0\}$ in $H_1(\Sigma_\phi)$. Here $\phi_\#$ is the induced map of $\phi$ to the simplicial chain complex. 
Hence we get 
\[
i_*(l_j)=[\pi_\#(l_j')]=[\phi_\#(\sigma_j)\times\{1\}-\pi_\#(\sigma_j\times\{0\})+\pi_\#(l_0')]=i_*(l_0)+[\phi_\#(\sigma_j)-\sigma_j]
\]

For the case that $g$ is 0, we will prove
\[
[\phi_\#(\sigma_j)-\sigma_j]=-\underset{s=1}{\overset{m}{\sum}}Q_Z(\gamma_s,l_j)\gamma_s.
\]
We take connected neighborhoods $\nu_k$ of $p_k$ in $\partial\Sigma_{0,r+1}$ for $k=0,j$. Attaching a $2$-dimensional $1$-handle 
$h_1=[-1,1]\times[0,1]$ along $\{-1\}\times[0,1]$ and $\{1\}\times[0,1]$ together with $\nu_0$ and $\nu_j$, we obtain a new surface $\Sigma'$ diffeomorphic to $\Sigma_{1,r}$(Fig \ref{fig4}).

\begin{figure}[H]
\labellist
\hair 2pt
\pinlabel $\sigma_j$ [t] at 90 84
\pinlabel $\sigma_j'$ [t] at 83 209
\pinlabel $p_0$ [t] at 35 148
\pinlabel $p_j$ [t] at 172 148
\pinlabel $\Sigma_{0,r+1}$ [t] at 216 50
\pinlabel $\ldots$ [t] at 267 170
\pinlabel $h_1$ [t] at 28 238
\endlabellist
\centering
\includegraphics[scale=0.7,height=7.7cm]{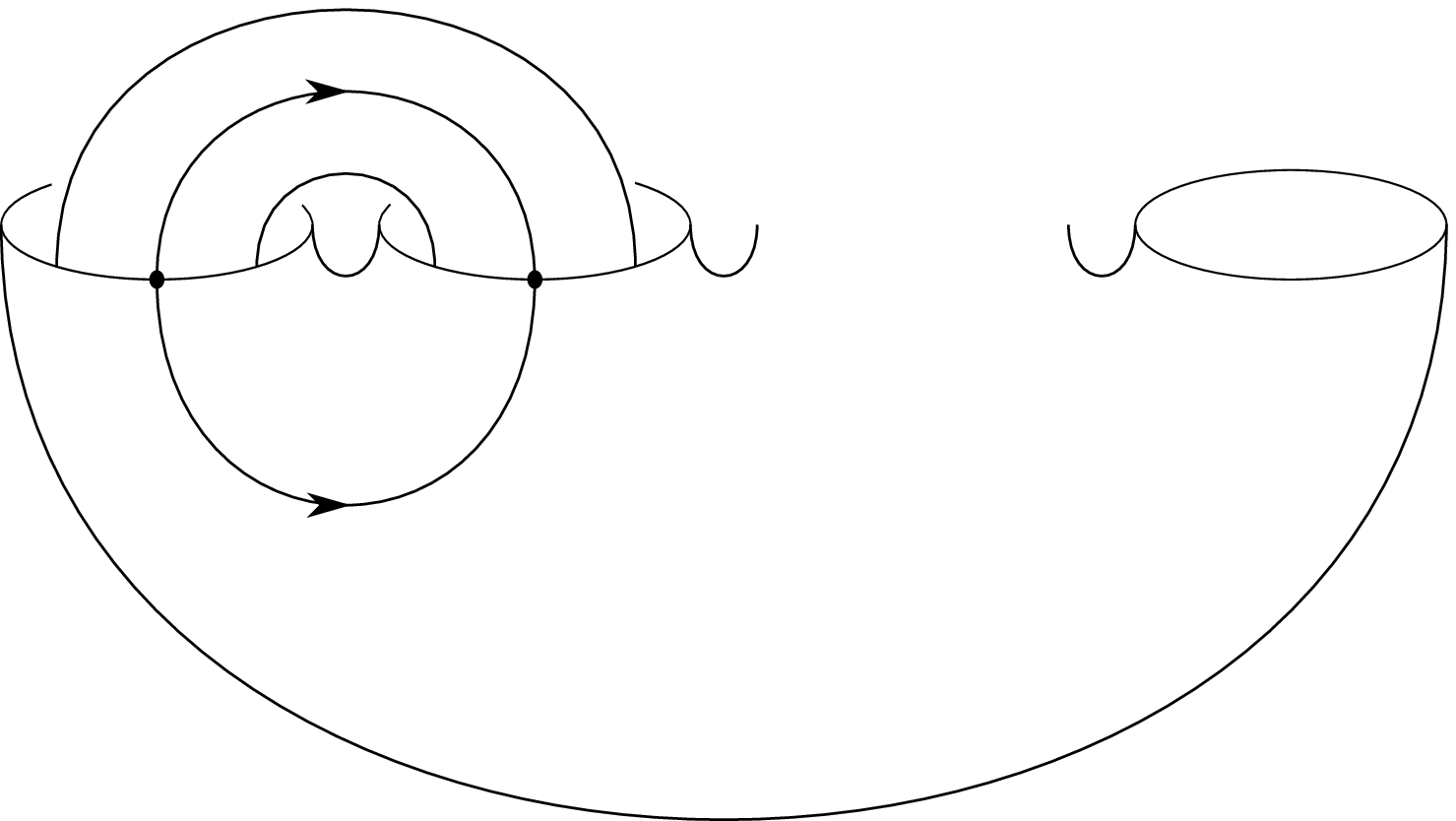}
\caption{$\Sigma'$}
\label{fig4}
\end{figure}
Here $\sigma_j'$ denotes a simplicial 1-chain expressing the core of the $h_1$ in $\Sigma'$.
 We note that $H_1(\Sigma')$ is spaned by $\{[\sigma_j-\sigma_j'],m_1,\dots,m_r\}$ and the intersection form $Q_{\Sigma'}$ satisfies 
\[
Q_{\Sigma'}(x,y)=
\left\{
\begin{array}{l}
1\hspace{0.7cm} \textrm{if}\ (x,y)=([\sigma_j-\sigma_j'],m_j)\\
0\hspace{0.7cm} \textrm{if}\ \{x,y\}\neq\{m_j,[\sigma_j-\sigma_j']\}.
\end{array}
\right.
\]
For a compact oriented suface $F$, every orientation preserving diffeomorphism of $F$ is isotopic to a product of Dehn twists. 
Let $D(\gamma)$ be the right-handed Dehn twist along a simple closed curve $\gamma$. 
The action of $D(\gamma)$ on $H_1(F)$ is given by the \textit{Picard-Lefschetz formula}:
$D(\gamma)_*(x)=x+Q_F(\gamma,x)\gamma$ for $x\in H_1(F)$.

We will first prove the case that $\phi=D(\gamma)$, and then the case of that $\phi$ is a product of Dehn twists. 
Let $\gamma$ be a simple closed curve in $\Sigma_{0,r+1}\subset\Sigma'$. 
By the fact that $\sigma'$ is not in Int $\Sigma_{0,r+1}$ and the Picard-Lefschetz formula, we have
\[
[D(\gamma)_{\#}(\sigma_j)-\sigma'_j]=D(\gamma)_*([\sigma_j-\sigma'_j])=[\sigma_j-\sigma'_j]+Q_{\Sigma'}(\gamma,[\sigma_j-\sigma'_j])\gamma.
\]
Then $\gamma$ is a homology class in $H_1(\Sigma_{0,r+1})=\underset{j=1}{\overset{r}{\oplus}}\R m_j$, it can be expressed by
$\underset{j=1}{\overset{r}{\sum}}a_jm_j$ for some $a_j\in\R\ (j=1,\dots,r)$. 
Under this representation, it is not hard to see that $Q_{\Sigma'}(\gamma,[\sigma_j-\sigma'_j])$ equals to $-a_j$. 
Since $m_i$ and $l_j$ satisfy that $Q_Z(m_i,l_j)$ equals to $\delta_{ij}$, $Q_Z(\gamma,l_j)$ is equal to $a_j$.  
Hence we get
\[
[D(\gamma)_\#(\sigma_j)-\sigma_j']=[\sigma_j-\sigma_j']-Q_Z(\gamma,l_j)\gamma.
\]
By the transposition of the term $\sigma'_j$ on the both sides of the above equality, this induces
\[
[D(\gamma)_\#(\sigma)-\sigma]=-Q_Z(\gamma,l_j)\gamma.
\]
Next, for any $\gamma_1,\dots,\gamma_k\subset\Sigma_{0,r+1}$, we assume
\[
[\left(D(\gamma_k)\circ\dots\circ D(\gamma_1)\right)_\#(\sigma_j)-\sigma_j]=-\underset{s=1}{\overset{k}{\sum}}Q_Z(\gamma_s,l_j)\gamma_s.
\]
We prove the statement by induction on $k$.
 Let $\psi$ be $D(\gamma_k)\circ\dots\circ D(\gamma_1)$ and $\gamma$ a simple closed curve in $\Sigma_{0,r+1}$. 
The restrictions of these diffeomorphisms to $h_1$ are both the identity, so by the Picard-Lefschetz formula, 
\[
[D(\gamma)_\#(\psi_\#(\sigma_j))-\sigma'_j]=[D(\gamma)_\#(\psi_\#(\sigma_j-\sigma_j'))]=[\psi_\#(\sigma_j)-\sigma_j']+
Q_{\Sigma'}(\gamma,[\psi_\#(\sigma_j)-\sigma_j'])
\]
We can write $[\psi_\#(\sigma_j)-\sigma_j']$ as $[\sigma_j-\sigma'_j]+\alpha$ because of the Picard-Lefschetz formula, 
where $\alpha$ is an element of $H_1(\Sigma')$.
 Then we get
\[
[D(\gamma)_\#(\psi_\#(\sigma_j))-\sigma'_j]=[\sigma_j-\sigma'_j]+\alpha-Q_Z(\gamma,l_j)\gamma+Q_{\Sigma'}(\gamma,\alpha)\gamma.
\]
By the transposition, it leads
\[
[D(\gamma)_\#(\psi_\#(\sigma_j))-\sigma_j]=\alpha-Q_Z(\gamma,l_j)\gamma+Q_{\Sigma'}(\gamma,\alpha)\gamma.
\]
On the other hand, $\alpha$ is equal to $[\psi_\#(\sigma_j)-\sigma]$ and then, by the assumption, $\alpha$ is equal to 
$-\underset{s=1}{\overset{k}{\sum}}Q_Z(\gamma_s,l_j)\gamma_s$. Moreover, $Q_{\Sigma_{0,r+1}}$ is 0, then $Q_{\Sigma'}(\gamma,\gamma_s)$ vanishes.
 This implies that $Q_{\Sigma'}(\gamma,\alpha)$ also vanishes.
\end{proof}


\section{Examples}

In this section, we give two examples of caluculation for the signature.
 Let $r$ be a positive integer and consider Lefschetz fibrations $Y_1$ and $Y_2$ with fiber $\Sigma_{0,r+2}$.
 We denote the $i$-th boundary component of $\Sigma_{0,r+2}$ by $\delta_i$ and the boundary of gluing region by Z. 

 The vanishing cycles of the first example $Y_1$ are ordered set of the curves $(\gamma_{1,2},\gamma_{1,3},\dots\\\dots,\gamma_{r-1,r},\gamma_{r-1,r+1},\gamma_{r,r+1})$,
 where $\gamma_{i,j}$ is a curve surrounding $\delta_i$ and $\delta_j$ for $1\leq i<j\leq r+1$ as in Fig \ref{figex}.
 \begin{figure}[ht!]
\labellist
\hair 2pt
\pinlabel $\delta_{r+1}$ [t] at 220 480
\pinlabel $\delta_2$ [t] at 600 480
\pinlabel $\delta_1$ [t] at 410 540
\pinlabel $\delta_{r}$ [t] at 140 332
\pinlabel $\delta_{r-1}$ [t] at 220 185
\pinlabel $\delta_3$ [t] at 680 332
\pinlabel $\delta_4$ [t] at 600 185
\pinlabel $\gamma_{1,r+1}$ [t] at 280 590
\pinlabel $\gamma_{1,2}$ [t] at 505 590
\pinlabel $\delta_0$ [t] at 100 590
\pinlabel $\gamma_{2,3}$ [t] at 730 430
\pinlabel $\gamma_{3,r-1}$ [t] at 410 245
\endlabellist
\centering
\includegraphics[scale=0.3]{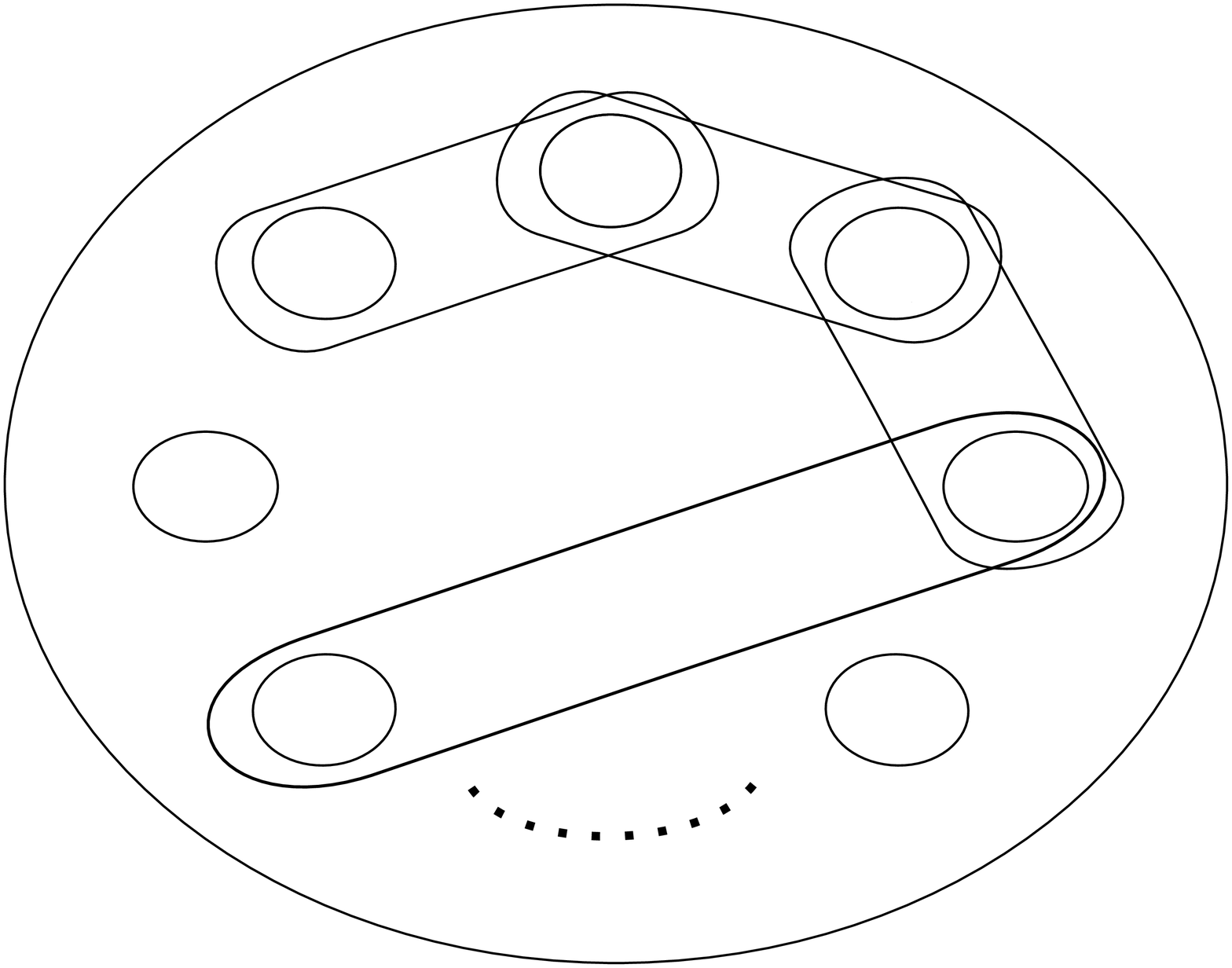}
\caption{}
\label{figex}
\end{figure}
 The homology class of each vanishing cycle $\gamma_{i,j}$ is $\delta_i+\delta_j$ and $Q_Z(\delta_i,l_j)$ is equal to $\delta_{ij}$.
 We get:
\begin{align}
\underset{1\leq i<j\leq r+1}{\sum}Q_Z(\gamma_{i,j},l_k)\gamma_{i,j}
&=\underset{i=1}{\overset{k-1}{\sum}}Q_Z(\gamma_{i,k},l_k)\gamma_{i,k}+
\underset{i=k+1}{\overset{r+1}{\sum}}Q_Z(\gamma_{k,i},l_k)\gamma_{k,1}\nonumber\\
&=\underset{i=1}{\overset{k-1}{\sum}}(\delta_i+\delta_k)+\underset{i=k+1}{\overset{r+1}{\sum}}(\delta_k+\delta_i)\nonumber\\
&=\delta_1+\delta_2+\cdots+r\delta_k+\cdots+\delta_{r+1}.\nonumber
\end{align}
By Lemma \ref{lem1} and Proposition \ref{p2}, the dimension of $\langle \gamma_{1,2},\dots,\gamma_{r,r+1}\rangle$
 is equal to that of
 $\langle \underset{1\leq i<j\leq r+1}{\sum}Q_Z(\gamma_{i,j},l_1)\gamma_{i,j,\dots,\underset{1\leq i<j\leq r+1}{\sum}Q_Z(\gamma_{i,j},l_r+1)\gamma_{i,j}}\rangle$.
 By simply computation, dim$\langle r\delta_1+\cdots+\delta_{r+1},\dots,\delta_1+\cdots+r\delta_{r+1}\rangle$ is equal to $r+1$ and the number of vanishing cycles $r(r+1)/2$.
 Theorem \ref{thm1} implies that
 $\sigma(Y_1)=-r(r+1)/2+r+1=-(r-2)(r+1)/2$. 

Next we choose the vanishing cycles as one $\delta_0$ and $r-1$ $\delta_{i}$ for $i=1,\dots,r+1$.
 The global monodromy of $Y_2$ is represented by $D(\delta_0)\circ D(\delta_1)^{r-1}\circ\dots\circ D(\delta_{r+1})^{r-1}$.
 Because of dim $H_1(\Sigma_{0,r+2})$ is $r+1$ and the number of vanishing cycles is $r^2$,
 the signature of $Y_2$ is equal to $-r^2+r+1$.
\begin{rem}
The global monodromies of these examples are the same.
 This fact is proved by Wajnryb \cite{Waj}.
 Therefore the above two allowable Lefschetz fibrations have same global monodromies but different signatures.
\end{rem}

\begin{center}
{}
\end{center}

\end{document}